\documentclass[12pt, reqno]{article}
\usepackage{amssymb, mathtools}
\usepackage{latexsym}
\usepackage{amsmath}
\usepackage{amsthm}

\usepackage{epsfig}
\usepackage{verbatim}
\usepackage[all,cmtip]{xy}
\usepackage{enumerate}

\usepackage{authblk}

\usepackage{soul}
\usepackage[usenames,dvipsnames]{color}
\usepackage[colorlinks=true,linkcolor=blue,citecolor=blue]{hyperref}

\newtheorem*{lemma*}{Lemma}

\newtheorem*{theorem*}{Theorem}

\newtheorem{theorem}{Theorem}[section]
\newtheorem{proposition}[theorem]{Proposition}
\newtheorem{lemma}[theorem]{Lemma}

\newtheorem*{claim*}{Claim}

\newtheorem*{conjecture*}{Conjecture}

\theoremstyle{definition}
\newtheorem{definition}[theorem]{Definition}
\newtheorem{example}[theorem]{Example}

\theoremstyle{remark}
\newtheorem{remark}[theorem]{Remark}

\theoremstyle{definition}

\DeclareMathOperator{\ad}{ad}

\newcommand{\diff}{d}

\DeclareMathOperator{\Ker}{Ker}
\DeclareMathOperator{\id}{Id}

\newcommand{\Ran}{\textup{Im}}

\DeclareMathOperator{\ann}{Ann}

\newcommand{\xrightarrowdbl}[2][]{%
	\xrightarrow[#1]{#2}\mathrel{\mkern-14mu}\rightarrow
}

\title{Fibrations and higher products in cohomology}
\author[1]{Alexander Gorokhovsky\thanks{Email: \texttt{alexander.gorokhovsky@colorado.edu}; partially supported by NSF grant DMS-0900968.}}
\author[2]{Zhizhang Xie\thanks{Email: \texttt{xie@math.tamu.edu}; partially supported by NSF grant DMS-1500823. }}
\affil[1]{Department of Mathematics, University of Colorado, Boulder}
\affil[2]{Department of Mathematics, Texas A\&M Univeristy}
\date{}

\begin{document}

\maketitle
\begin{abstract}
This paper is a continuation of \cite{Gorokhovsky:2017aa}. Working 	in the context of commutative differential graded algebras, we study the ideal of the cohomology classes which can be annihilated by fibrations whose fiber has finite homological dimension. In the present paper we identify these classes with   certain higher products in cohomology.

\end{abstract}

\section{Introduction}
This paper is a continuation of \cite{Gorokhovsky:2017aa}. The motivation for these papers  comes from the following question, arising in some geometric situations. Consider a topological space $X$ and a cohomology class $\alpha \in H^\bullet(X; \mathbb{Q})$ of positive degree. Does there exist a fibration $\pi \colon Y \to X$ with a fiber of finite cohomological dimension such that $\pi^* \alpha =0$ (we refer to this by saying that $\alpha$ is annihilated by $\pi$)? It is easy to see that the answer is positive for every class of even degree. Indeed, every such class is a multiple of the Euler class of a sphere fibration and thus is annihilated by a pull back to this fibration. It is easy to see that the set of classes which can be annihilated forms an ideal
in $ H^\bullet(X; \mathbb{Q})$. One would like to obtain some characterization of this ideal.

In \cite{Gorokhovsky:2017aa} this question is considered in the framework of rational homotopy theory.
Namely one replaces topological spaces by commutative differential graded algebras (DGA) and fibrations by algebraic fibrations. In this context, the following two results were proved (see Section \ref{algfib} for the precise definitions).
\begin{enumerate}[$(1)$]
	\item For each commutative DGA  $(A, d)$, there exists an iterated odd algebraic spherical fibration $(TA, d) $ over $(A, d)$ so that  all even cohomology [except dimension zero] vanishes.
	
	\item Let $(B, \diff)$ be a connected commutative DGA  such that $H^{2k}(B)=0$ for all $0 < 2k  \leq 2N$. If $\iota \colon (B, \diff) \to (B\otimes \Lambda V, \diff) $ is an algebraic fibration whose  algebraic fiber has finite cohomological dimension, then the induced map
	\[ \iota_\ast \colon \bigoplus_{i\leq 2N} H^i(B)\to  \bigoplus_{i\leq 2N} H^i(B\otimes \Lambda V)\]
	is injective.
\end{enumerate}

From these results one concludes that a class in cohomology of a commutative DGA can be annihilated by an algebraic fibration with a fiber of finite cohomological dimension if and only if it can be annihilated by an iterated odd algebraic spherical fibration (cf. Proposition \ref{characterisations}).

In the present paper, building on the characterization obtained in \cite{Gorokhovsky:2017aa},  we give another description of the ideal in cohomology of a commutative DGA
$A$,  which consists of elements which can be annihilated by algebraic fibrations with a fiber of finite cohomological dimension.

Given an auxiliary nilpotent differential graded Lie algebra (DGLA) $L$ with central extension, we define a certain higher product operation -- MC product -- in cohomology of $A$ by considering Maurer-Cartan (MC) equation in the DGLA $A \otimes L$. For some choices of $L$ these operations coincide with Massey products.

 For a commutative DGA $A$, let us  denote by $\ann(A)$ the ideal of $H^\bullet(A)$ consisting of the elements which can be annihilated by an algebraic fibration with a fiber of finite cohomological dimension.
The main result of this paper (Theorem \ref{thm:main}) is then the  following:
\begin{theorem}
	
	Let $A$ be a connected commutative DGA. Then the ideal  $\ann(A)$ coincides with the sum of even positive degree cohomology with the set of all MC higher products, i.e., every element in $\ann(A)$ can be written as the sum of an even positive degree cohomology class and a MC higher product. 
\end{theorem}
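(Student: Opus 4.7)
The plan is to use the characterization from Proposition~\ref{characterisations} to replace $\ann(A)$ by the set of classes annihilated by some iterated odd algebraic spherical fibration. Even positive-degree classes lie in $\ann(A)$ automatically by the Euler-class/sphere-bundle argument already recorded in the introduction, so the real content is the identification, modulo $\bigoplus_{k>0} H^{2k}(A)$, of the set of MC higher products with the set of classes annihilated by an iterated odd spherical tower. Two inclusions have to be established.

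For the direction ``MC products are annihilated,'' I would start from a nilpotent DGLA $L$, a central extension $0 \to Z \to \tilde L \to L \to 0$, and a Maurer--Cartan element $\xi \in (A \otimes L)^1$; the corresponding MC higher product is the class in $H^\bullet(A)\otimes Z$ of $d\tilde\xi + \tfrac12[\tilde\xi,\tilde\xi]$ for any set-theoretic lift $\tilde\xi \in A \otimes \tilde L$. Dualizing via the Chevalley--Eilenberg construction, the extension corresponds to a Hirsch extension $C^*(L) \hookrightarrow C^*(\tilde L) \cong C^*(L) \otimes \Lambda(sZ^*)$ by generators of odd degree (because $Z$ sits in even DGLA degrees, and suspension plus linear duality lands in odd CDGA degrees), while $\xi$ is exactly the data of a CDGA morphism $\phi_\xi \colon C^*(L) \to A$. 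Pushing out along $\phi_\xi$ yields an iterated odd algebraic spherical fibration $A \to A \otimes_{C^*(L)} C^*(\tilde L) \cong A \otimes \Lambda(sZ^*)$ on which, by the very definition of the pushout differential, the image of the MC product representative becomes a coboundary of the newly adjoined odd generators.

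For the converse inclusion, given $\alpha \in \ann(A)$ and an iterated odd spherical fibration $\iota \colon A \to A \otimes \Lambda V$ with $\iota^* \alpha = d\beta$, I would Koszul-dualize the tower: the filtration on $V$ producing the odd spherical extensions dualizes to a nilpotent DGLA $L$ with even-degree generators, brackets read off from the quadratic parts of the differential on $\Lambda V$, and a one-dimensional central extension $\tilde L$ built from the cocycle representative of $\alpha$ together with the primitive $\beta$. The Maurer--Cartan element arising tautologically from this data will have the given $\alpha$ as its MC product, up to terms involving components of $\beta$ of even total degree, which are the source of the required even-degree correction. The main obstacle is precisely this reverse construction: in general the differential on $\Lambda V$ has cubic and higher parts, so $L$ is most naturally only an $L_\infty$-algebra, and one must exploit the nilpotent tower structure to either truncate to a genuine DGLA central extension or to reinterpret higher terms as contributions to the central extension cocycle; simultaneously one must show that the even-degree remainder, separated by bidegree in the decomposition of $\beta$, is itself a well-defined class in $\bigoplus_{k>0}H^{2k}(A)$ independent of the choice of $\beta$ and of the tower. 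Once these points are settled, the ideal property of both sides is immediate from the multiplicative structure on $H^\bullet(A)$ and the fact that the MC product construction is natural in $A$.
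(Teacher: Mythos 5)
Your first inclusion (that MC higher products lie in $\ann(A)$) has a genuine gap, and it occurs exactly in the only case that matters. The parenthetical claim that the adjoined generator $sZ^*$ has odd degree ``because $Z$ sits in even DGLA degrees'' is not part of the definition of an MC-product data and is false in the relevant situation: the MC higher product lives in $H^{2-q}(A)$, where $q$ is the degree of $Z$, so an \emph{odd} MC product forces $q$ odd and hence forces the dual generator $sZ^*$ to have \emph{even} degree $1-q$. Your pushout $A\otimes\Lambda(sZ^*)$ is then a polynomial (even spherical) extension, whose fiber has infinite cohomological dimension, so it does not witness membership in $\ann(A)$. This already fails for the simplest example, a cup product $b\cdot e$ with $b$ odd and $e$ even (Example \ref{ex:Massey} with $n=2$): your construction adjoins an even generator $y$ with $dy=be$, rather than the odd generator with differential $e$ that actually kills $b\cdot e$. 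The paper avoids any such fibration-by-fibration construction: it maps everything into the universal fibration $\tau\colon A\hookrightarrow TA$ of Theorem \ref{thm:itoddsphere} and uses formality of $TA$ (Proposition \ref{formality of TA}) together with Lemma \ref{lm:formal} and the vanishing of all products in $H^{\bullet}(TA)$ to conclude $MP_\lambda(TA)=0$, hence $\tau_*$ kills every MC product. Some indirect idea of this kind seems unavoidable for this direction.

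For the converse inclusion, your plan (Koszul-dualize the tower $A\otimes\Lambda V$ to a nilpotent $L_\infty$-algebra and build the central extension from the cocycle of $\alpha$ and its primitive) is a reasonable heuristic, but you yourself identify the two essential difficulties --- replacing the $L_\infty$-structure by a genuine DGLA central extension, and controlling the even-degree remainder --- and resolve neither, so as written this direction is a program rather than a proof. The paper instead first reduces to a finitely iterated odd spherical tower via Proposition \ref{characterisations}, treats the one-generator case by the Gysin sequence (the kernel of $\iota_*$ is $e\cdot H^{\bullet}(A)$, and a cup product is an MC product), and then inducts on the number of generators: the inductive step (Proposition \ref{prop:ind}) upgrades an MC-product datum over $A[x]$ to one over $A$ by forming $\widetilde N=\mathbb{Q}\eta\ltimes\widetilde L[\varepsilon]$, absorbing the Euler class into the MC element as $e\eta$, twisting the differential by $[\ell_0\varepsilon,\cdot]$, and truncating at degree $0$. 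Adjoining one odd generator at a time is precisely what sidesteps the $L_\infty$ issue you ran into.
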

The identification of $\ann(A)$ with the set of MC higher products allows one to apply results about Maurer-Cartan equation to the study of $\ann(A)$, c.f.  e.g. Lemma \ref{lm:formal}.

This paper is organized as follows. In the section \ref{prelim} we recall some definitions as well as a few results from \cite{Gorokhovsky:2017aa}; we also recall basic facts about Maurer-Cartan equation in DGLA. In the section \ref{sec:mcp} we give a construction of MC higher products. Finally in the section \ref{sec:main} we prove the main results of this paper.

The authors would like to thank D. Sullivan for many   illuminating discussions.

\section{Preliminaries}\label{prelim}
\subsection{Algebraic fibrations}\label{algfib}
In this section we collect some basic definitions as well as a few results from \cite{Gorokhovsky:2017aa} which will be needed in this paper.
\begin{definition}\label{def:algfib}
	
	  An algebraic fibration   is an inclusion of commutative DGAs $(B, \diff) \hookrightarrow (B\otimes \Lambda V, \diff)$ with $V = \oplus_{k\geq 1} V^k$ a graded vector space; moreover,  $V = \bigcup_{n=0}V(n)$, where $V(0) \subseteq V(1) \subseteq V(2) \subseteq \cdots $ is an increasing sequence of graded subspaces of $V$ such that
		\[ d : V(0) \to B \quad \textup{ and } \quad  d: V(n) \to B\otimes \Lambda V(n-1), \quad n\geq 1,\]
		where $\Lambda V$ is the free commutative DGA generated by $V$.
\end{definition}

\begin{definition}
	An \emph{iterated odd algebraic spherical  fibration} over $(B, d)$ is an algebraic fibration $(B, \diff) \hookrightarrow (B\otimes \Lambda V, \diff)$
 such that  $V^k=0$ for $k$ even.     This fibration is called  \emph{finitely iterated odd algebraic spherical fibration} if $\dim V < \infty$.
\end{definition}

\begin{remark}\label{finite}
  Let $(B, \diff) \hookrightarrow (B\otimes \Lambda V, \diff)$  be an iterated odd algebraic spherical  fibration. It is easy to see that for every $x \in B\otimes \Lambda V$ there exists $U \subset V$, $\dim U < \infty$ such that $ B\otimes \Lambda U$ is a differential graded subalgebra of $B\otimes \Lambda V$ and $x \in B\otimes \Lambda U$.
\end{remark}

\begin{theorem}\label{thm:itoddsphere}
	For each commutative DGA  $(A, d)$, there exists an iterated odd algebraic spherical fibration $\tau \colon(A, d) \hookrightarrow  (TA, d) $ over $(A, d)$ such that   $H^{2i}(TA)=0$ for $i>0$.
\end{theorem}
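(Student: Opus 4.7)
The plan is to construct $TA$ by a countable iteration of a one-stage ``killing'' procedure: at each stage we adjoin odd-degree generators whose differentials are chosen cocycle representatives of the surviving positive even-degree cohomology classes.

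For the one-stage procedure, given any commutative DGA $(B,d)$, I would choose, for each $i > 0$, a $\mathbb{Q}$-basis of $H^{2i}(B)$ and lift every basis element to a cocycle $\alpha_\lambda \in B^{2i}$. Let $W$ be the graded vector space with a generator $x_\lambda$ of odd degree $2i-1$ for each such $\alpha_\lambda$, and set $B' = B\otimes \Lambda W$ with the differential extended by $dx_\lambda = \alpha_\lambda$ and the Leibniz rule. Since each $\alpha_\lambda$ is closed, $d^2 = 0$ on $B'$, and by construction the image of every positive even-degree class of $B$ in $H^\bullet(B')$ vanishes. I would then iterate: set $A(0) = A$, apply the one-stage procedure to $A(n)$ to produce a graded vector space $W_{n+1}$ of odd-degree generators, and set $A(n+1) = A(n) \otimes \Lambda W_{n+1}$. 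Define $TA := \bigcup_{n\geq 0} A(n) = A\otimes \Lambda V$ with $V = \bigoplus_{n\geq 1} W_n$.

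To see that the inclusion $\tau\colon (A,d)\hookrightarrow (TA,d)$ meets Definition \ref{def:algfib} with only odd generators, I would observe that $V$ lies entirely in odd degrees and set $V(n) := W_1 \oplus \cdots \oplus W_{n+1}$. Then $dV(0)\subset A$ and $dV(n)\subset A\otimes \Lambda V(n-1)$: the differentials of $W_{n+1}$-generators land in $A(n) = A\otimes \Lambda V(n-1)$ by construction, while those of earlier $W_k$ land in even smaller subalgebras. To conclude $H^{2i}(TA) = 0$ for $i > 0$, given a cocycle $\beta \in TA$ of positive even degree, Remark \ref{finite} places $\beta$ in $A\otimes \Lambda U$ for some finite-dimensional $U \subset V$, hence in $A(n)$ for some $n$; by construction the class $[\beta] \in H^{2i}(A(n))$ is a coboundary in $A(n+1)\subset TA$.

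The construction presents no serious obstacle, but the point deserving attention is that the iteration cannot be truncated at a finite stage: products of freshly adjoined odd-degree generators (and their interactions with elements of earlier stages) can produce new even-degree cocycles not present before, and these are killed only at subsequent stages. The countable iteration combined with the finite-support statement of Remark \ref{finite} guarantees that every even-degree cohomology class in the limit eventually becomes trivial.
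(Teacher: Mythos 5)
Your construction is correct: adjoining odd-degree generators whose differentials are cocycle representatives of a basis of each positive even cohomology group, iterating countably, and passing to the union is exactly the standard argument, and you correctly flag the one essential point — that products of newly adjoined odd generators create fresh even classes, so the iteration cannot stop at a finite stage. Note that the present paper does not prove this theorem itself but cites Theorem 3.3 of \cite{Gorokhovsky:2017aa}; your argument is the expected proof of that result and is consistent with everything the paper uses (in particular the filtration $V(n)=W_1\oplus\cdots\oplus W_{n+1}$ you exhibit verifies Definition \ref{def:algfib}).
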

This is proven in   \cite{Gorokhovsky:2017aa} (Theorem 3.3).

\begin{proposition} \label{formality of TA}$(TA,d)$ is formal.
\end{proposition}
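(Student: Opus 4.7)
The plan is to exploit the fact that $H^\bullet(TA)$, viewed as a graded algebra, is so restricted that every higher homotopical operation on it vanishes on degree grounds, forcing $(TA,d)$ to be quasi-isomorphic to $(H^\bullet(TA),0)$.

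First, I would record the structure of $H^\bullet(TA)$. Since $TA$ is obtained from the connected DGA $A$ by adjoining only odd-degree generators, $TA$ is connected, so $H^0(TA)=\mathbb{Q}$. Combined with Theorem \ref{thm:itoddsphere}, this gives
\[ H^\bullet(TA)=\mathbb{Q}\oplus W, \]
where $W$ is concentrated in odd positive degrees. Because the cup product of two odd-degree classes lies in even degree, and $H^{2i}(TA)=0$ for $i>0$, the multiplication restricted to $W$ is identically zero; thus $H^\bullet(TA)$ is a square-zero extension of $\mathbb{Q}$ by $W$.

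Next, I would promote this observation to all higher operations via Kadeishvili's transfer theorem, which endows $H^\bullet(TA)$ with a minimal $C_\infty$-structure $\{m_n\}_{n\geq 2}$ that is $C_\infty$-quasi-isomorphic to $(TA,d)$. For $n\geq 2$ the operation $m_n$ has internal degree $2-n$ and vanishes whenever an argument is the unit, so it suffices to control it on $W^{\otimes n}$. If the inputs lie in odd positive degrees $d_1,\dots,d_n$, the output has degree $\sum_i d_i-(n-2)$, whose parity is that of $n-(n-2)=2$, hence even, and which is at least $n-(n-2)=2$, hence positive. Thus $m_n$ lands in a vanishing cohomology group, so $m_n\equiv 0$ for all $n\geq 2$. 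The transferred $C_\infty$-structure on $H^\bullet(TA)$ is therefore trivial, which yields a $C_\infty$-quasi-isomorphism, and a fortiori a chain of DGA quasi-isomorphisms, between $(TA,d)$ and $(H^\bullet(TA),0)$.

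The most delicate point I expect to negotiate is the appeal to $C_\infty$-transfer, which may be heavier than the paper prefers. A self-contained alternative runs as follows: the same parity/positivity count shows every Massey product on $H^\bullet(TA)$ vanishes, and one then constructs a zig-zag of DGA quasi-isomorphisms directly by choosing cocycle representatives for a graded basis of $W$, extending them to a DGA map from a free graded commutative algebra on odd generators, and inductively killing the coboundaries produced by their pairwise products. The bookkeeping of this inductive construction is the only real obstacle.
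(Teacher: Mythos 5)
Your argument is correct, but it is not the route the paper takes: the paper offers no internal proof at all, deferring instead to Proposition 4.2 and Corollary 5.6 of \cite{Gorokhovsky:2017aa}, so what you have supplied is a self-contained substitute. Your substitute is the standard ``intrinsic formality'' argument: since $A$ is connected, $H^\bullet(TA)=\mathbb{Q}\oplus W$ with $W$ in odd positive degrees by Theorem \ref{thm:itoddsphere}, and the parity/positivity count ($n$ odd inputs of degree $\geq 1$ force the output of an operation of degree $2-n$ into $H^{\mathrm{even},\,\geq 2}(TA)=0$) kills every transferred $C_\infty$-operation, hence yields formality via homotopy transfer over $\mathbb{Q}$. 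Two caveats. First, the connectedness of $A$ is genuinely needed for $H^0(TA)=\mathbb{Q}$ (otherwise degree-$0$ classes enter the count and the argument breaks); this is consistent with the paper's standing hypotheses, but you should state it as a hypothesis of the proposition. Second, your proposed ``lighter'' alternative is not actually a proof as sketched: vanishing of all Massey products does not in general imply formality, so you cannot simply replace the $C_\infty$-transfer step by a Massey-product computation; if you want to avoid transfer machinery, the correct elementary substitute is the Halperin--Stasheff obstruction theory, whose obstruction groups vanish here for the same degree reasons. What your approach buys is independence from the companion paper; what the paper's citation buys is brevity and consistency with the machinery already developed there.
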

This Proposition follows from   Proposition $4.2$ and Corollary $5.6$ of  \cite{Gorokhovsky:2017aa}.

\begin{theorem} \label{thm:inj} Let $(B, \diff)$ be a connected commutative DGA  such that $H^{2k}(B)=0$ for all $0 < 2k  \leq 2N$. If $\iota \colon (B, \diff) \to (B\otimes \Lambda V, \diff) $ is an algebraic fibration whose  algebraic fiber has finite cohomological dimension, then the induced map
	\[ \iota_\ast \colon \bigoplus_{i< 2N} H^i(B)\to  \bigoplus_{i< 2N} H^i(B\otimes \Lambda V)\]
	is injective.
\end{theorem}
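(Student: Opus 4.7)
The natural approach is via the cohomological Serre spectral sequence of the Koszul--Sullivan extension $(B,d)\hookrightarrow (B\otimes \Lambda V, d)$. Filter $B\otimes \Lambda V$ by base degree, $F^p = B^{\geq p}\otimes \Lambda V$; the product rule together with the fact that $dV\subset B\otimes \Lambda V$ shows that $d$ preserves this filtration, so one obtains a spectral sequence with $E_1^{p,q} = B^p\otimes H^q(\Lambda V, \bar d)$, where $\bar d\colon \Lambda V \to \Lambda V$ is the fibre differential extracted from the $B^0$-component of $d|_V$. Under the standing connectedness hypothesis one has $E_2^{p,q} \cong H^p(B)\otimes H^q(\Lambda V, \bar d)$, converging to $H^{p+q}(B\otimes \Lambda V)$, and $\iota_\ast$ coincides with the edge morphism $H^i(B) = E_2^{i,0}\twoheadrightarrow E_\infty^{i,0}\hookrightarrow H^i(B\otimes \Lambda V)$. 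Injectivity of $\iota_\ast$ on $H^i(B)$ for $i<2N$ therefore reduces to the vanishing of every incoming differential $d_r\colon E_r^{i-r,\, r-1}\to E_r^{i,0}$ with $r\geq 2$ (the outgoing ones land in $E_r^{i+r,\,1-r}=0$).

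I would handle the vanishing of these $d_r$ in two waves. First, whenever $i-r$ is even and strictly between $0$ and $2N$, the $E_2$-source $H^{i-r}(B)\otimes H^{r-1}(\Lambda V, \bar d)$ already vanishes by hypothesis, hence so does the corresponding $E_r$-source. For the remaining cases---namely $i-r=0$ or $i-r$ odd---I would first reduce to $\dim V<\infty$ by a Remark \ref{finite}-style finite support argument applied to any cochain $c\in B\otimes \Lambda V$ witnessing $\alpha=dc$, and then invoke the Friedlander--Halperin theorem: under the finite cohomological dimension hypothesis, $(\Lambda V, \bar d)$ is elliptic, so $H^\ast(\Lambda V, \bar d)$ is a Poincar\'e duality algebra whose fundamental class lies in a unique odd degree $m$. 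A parity bookkeeping based on this duality, combined with the even-vanishing hypothesis on $B$, should force any surviving source $E_r^{i-r,\,r-1}$ (for $i<2N$, $r\leq m+1$) to pair with a class of $B$ in the forbidden range $\{2k : 0<2k\leq 2N\}$, hence to vanish.

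The main obstacle I anticipate is making this last parity/duality interaction precise enough at every page $r$ to rule out nontriviality of $d_r$. A cleaner alternative, should the spectral-sequence bookkeeping prove unwieldy, is a generator-by-generator induction along the filtration $V(0)\subset V(1)\subset\cdots$. For an elementary odd extension $B\hookrightarrow B\otimes \Lambda(v)$ with $|v|$ odd and $dv=\beta\in B$, the Gysin short exact sequence $0\to B\to B\otimes \Lambda(v)\to B[-|v|]\to 0$ identifies the kernel in degree $i$ as $[\beta]\cdot H^{i-|v|-1}(B)$, which vanishes in the range $i<2N$: either $[\beta]\in H^{|v|+1}(B)$ is already zero by hypothesis (when $|v|+1\leq 2N$), or the source degree $i-|v|-1$ is negative (when $|v|+1>2N$ and $i<2N$). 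The finite cohomological dimension of the fibre then allows one to pair the even generators of $V$ with their cancelling odd partners coming from the elliptic structure and to treat each such pair as a single ``spherical'' block whose contribution is again controlled by multiplication by an even-degree cocycle---reducing the general case to the odd-generator case handled above.
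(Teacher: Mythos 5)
There is no in-paper proof to compare against here: Theorem \ref{thm:inj} is quoted from \cite{Gorokhovsky:2017aa} (Theorem 5.8), so your argument has to stand on its own, and as written it does not. The framework (Serre spectral sequence of the base-degree filtration, reduction to killing the differentials $d_r\colon E_r^{i-r,r-1}\to E_r^{i,0}$, and the Gysin sequence for a single odd generator) is sensible, and your ``first wave'' correctly disposes of the sources with $i-r$ even and positive. But the remaining cases are exactly where the finite-cohomological-dimension hypothesis must do all the work, and that is precisely what you leave as ``parity bookkeeping \dots should force.'' That this cannot be a formality is shown by $B=\Lambda(u)$ with $|u|=3$ and $V=\langle x\rangle$ with $|x|=2$, $dx=u$: every hypothesis except finite cohomological dimension of the fibre holds, yet $\iota_*[u]=0$ with $[u]\neq 0$ in $H^3(B)$. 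The dangerous differentials are transgressions of \emph{even}-degree fibre classes into \emph{odd}-degree base classes, and the mechanism that kills them rationally (for an even generator $x$ with killing partner $y$, $\bar d y=x^m+\cdots$, the identity $d(dy)=0$ forces the would-be Euler class of $x$ to be exact) is a relation in the model, not a consequence of Poincar\'e duality parity. Relatedly, your appeal to ellipticity contains a false statement: the fundamental class of an elliptic fibre need not sit in odd degree ($S^2$, $\mathbb{C}P^n$), so the duality bookkeeping as described cannot even get started.

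Two further steps are broken. First, the reduction to $\dim V<\infty$ via a Remark \ref{finite}-style support argument fails for general fibrations: the finitely generated sub-fibration $B\otimes\Lambda U$ containing a witness cochain need not have fibre of finite cohomological dimension (drop $y$ from $\Lambda(x_2,y_3)$ with $\bar d y=x^2$ and the fibre becomes $\mathbb{Q}[x]$), so the hypothesis is not inherited by the truncation. Second, the fallback generator-by-generator induction does not close: after one elementary odd extension the Gysin sequence gives $H^{2k}(B\otimes\Lambda(v))\supseteq \Ker\bigl([\beta]\colon H^{2k-|v|}(B)\to H^{2k+1}(B)\bigr)$ with $2k-|v|$ odd, so the standing hypothesis $H^{2k}=0$ for $0<2k\le 2N$ is generally destroyed at the very first step and cannot be fed into the next one; and ``pairing each even generator with its cancelling odd partner'' into a single spherical block is not compatible with the filtration $V(0)\subset V(1)\subset\cdots$, since the partner's differential involves the even generator itself together with base terms. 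In short, the easy cases are handled, but the entire content of the theorem --- why finite cohomological dimension of the fibre forbids even fibre classes from transgressing onto odd base classes, uniformly over all pages and all generators --- is missing.
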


This Theorem is also proven in \cite{Gorokhovsky:2017aa} (Theorem 5.8).

\subsection{Maurer-Cartan equation}\label{sec:mc}	
In this section, we recall some basic definitions and constructions of Maurer-Cartan elements in nilpotent differential graded Lie algebras.

Let $\Lambda=\oplus \Lambda^i$ be a nilpotent differential graded Lie algebra (DGLA); $\diff \colon \Lambda^\bullet \to\Lambda^{\bullet+1}$ denotes the differential.
Let $\alpha\in \Lambda^1$ be an element of degree $1$. If we define
\begin{equation*}
F(\alpha) = d\alpha +\frac{1}{2}[\alpha,\alpha],
\end{equation*}
then we have the following Bianchi identity:
\begin{equation*}
\diff F(\alpha) +[\alpha, F(\alpha)]=0.
\end{equation*}
\begin{definition}[Maurer-Cartan element]
An  element $\alpha\in \Lambda^1$ is called a Maurer-Cartan element if
\begin{equation*}
F(\alpha) =\diff \alpha +\frac{1}{2}[\alpha, \alpha]=0.
\end{equation*}
\end{definition}

\begin{definition}[Gauge transformation] For each element $X \in \Lambda^{0}$, we define an automorphism $\exp X : \Lambda \to \Lambda$ by
\begin{equation*}
(\exp X) \cdot \alpha = \alpha - \sum_{i=0}^{\infty} \frac{(\ad
X)^i}{(i+1)!}(\diff X +[\alpha, X])
\end{equation*}
for all $\alpha \in \Lambda$. Such an automorphism is called a gauge transformation of $\Lambda$. The group of gauge transformations of $\Lambda$ will be denoted by  $\exp \Lambda^{0}$.
\end{definition}

The gauge transformation group $\exp \Lambda^{0}$ is a nilpotent group. We also have the following equation:
\begin{equation*}
\diff +(\exp X) \cdot \alpha = e^{\ad X} (\diff+\alpha).
\end{equation*}
It follows that
\begin{equation*}
F((\exp X) \cdot \alpha  )= e^{\ad X} (F(\alpha)).
\end{equation*}
We see that, if $\alpha $ is a Maurer-Cartan element, then $(\exp X) \cdot \alpha $ is also a Maurer-Cartan element.

\begin{definition}
Let $MC(\Lambda)$ be the set of equivalence classes of Maurer-Cartan elements in $\Lambda$ under the action of the gauge transformation group $\exp \Lambda^{0}$.
\end{definition}
Let $f \colon \Lambda_1 \to \Lambda_2$ be a morphism of nilpotent DGLAs. Then it induces a map $f_* \colon MC(\Lambda_1) \to MC(\Lambda_2)$.

\begin{definition}
A DGLA $\Lambda$ is called a filtered DGLA, if there exist subDGLAs $F^i\Lambda$, $i \ge 1$, such that
\begin{enumerate}[(i)]
\item $F^i\Lambda \supseteq F^j\Lambda$ for $i \le j$;
\item  $F^1\Lambda= \Lambda$;
\item $[F^i\Lambda, F^j\Lambda]\subset F^{i+j}\Lambda$.
\end{enumerate}
\end{definition}
A filtration has finite length if    $F^i\Lambda=0$ for $i\gg 0$. Any DGLA with a finite length filtration is nilpotent.
For each filtered DGLA $\Lambda$, the associated graded complex is defined by $\textup{Gr}^\bullet (\Lambda) := \bigoplus_i F^i\Lambda/F^{i+1}\Lambda$, with the differential induced by $d$. For any two filtered DGLAs $\Lambda_1$ and $\Lambda_2$,  a DGLA morphism $f \colon \Lambda_1 \to \Lambda_2$ is called \emph{filtered} if
$f(F^i\Lambda_1) \subset F^i\Lambda_2$ for all $i\geq 1$. Such a morphism induces a morphism of the associated graded complexes.

We recall the following version of  Equivalence Theorem proven in  \cite[Theorem 5.3]{Schlessinger:2012kq}; see also \cite[Theorem 2.4]{MR972343}.
\begin{theorem} \label{thm:GM} Let $\Lambda_1$, $\Lambda_2$ be two DGLAs with finite length filtration.
Let $f \colon \Lambda_1 \to \Lambda_2$ be a filtered morphism   such that the induced map on the associated graded complexes is a quasi-isomorphism. Then the map $f_* \colon MC(\Lambda_1) \to MC(\Lambda_2)$  is a bijection.
\end{theorem}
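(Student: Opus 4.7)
The plan is to prove bijectivity by induction on the length of the filtration, working through the tower of nilpotent quotients $\Lambda_k/F^{n+1}\Lambda_k$. The key structural observation is that the successive subquotients $G^n_k := F^n\Lambda_k/F^{n+1}\Lambda_k$ are \emph{abelian} DGLAs (since $[F^n,F^n]\subset F^{2n}\subset F^{n+1}$ for $n\geq 1$) sitting centrally inside $\Lambda_k/F^{n+1}\Lambda_k$ (since $[F^1,F^n]\subset F^{n+1}$). For such an abelian DGLA the Maurer-Cartan equation reduces to the $1$-cocycle condition and gauge equivalence reduces to cohomology in degree $1$, so $MC(G^n_k)\cong H^1(G^n_k)$. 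Because $f$ induces a quasi-isomorphism on associated graded complexes, it induces isomorphisms $H^\bullet(G^n_1)\cong H^\bullet(G^n_2)$, and this is the engine that drives the induction.

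The inductive claim is that $f_*\colon MC(\Lambda_1/F^{n+1}\Lambda_1)\to MC(\Lambda_2/F^{n+1}\Lambda_2)$ is bijective for every $n\geq 0$; the base $n=0$ is vacuous. For the inductive step I would run a standard obstruction-deformation argument. Given $\alpha_2\in MC(\Lambda_2/F^{n+1}\Lambda_2)$ with reduction $\bar\alpha_2\in MC(\Lambda_2/F^n\Lambda_2)$, induction yields $\bar\alpha_1\in MC(\Lambda_1/F^n\Lambda_1)$ with $f_*[\bar\alpha_1]=[\bar\alpha_2]$, and lifting a witnessing gauge transformation from level $n$ to level $n+1$ lets me assume $f(\bar\alpha_1)=\bar\alpha_2$. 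Pick any set-theoretic lift $\alpha_1'\in (\Lambda_1/F^{n+1}\Lambda_1)^1$ of $\bar\alpha_1$. Then $F(\alpha_1')\in G^n_1$ is a degree-$2$ cocycle (by the Bianchi identity together with centrality of $G^n_1$), and its class in $H^2(G^n_1)$ is independent of the lift. Writing $f(\alpha_1')=\alpha_2+\gamma$ with $\gamma\in G^n_2$, centrality gives $F(f(\alpha_1'))=d\gamma$, so $f_*[F(\alpha_1')]=0$ in $H^2(G^n_2)$; the isomorphism on $H^2$ forces $[F(\alpha_1')]=0$, and subtracting a primitive $\beta\in G^n_1$ produces an MC element $\alpha_1=\alpha_1'-\beta$. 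A final adjustment of $\beta$ by a $1$-cocycle, using $H^1(G^n_1)\cong H^1(G^n_2)$, makes $f_*[\alpha_1]=[\alpha_2]$ exactly. For injectivity, given $\alpha_1,\alpha_1''\in MC(\Lambda_1/F^{n+1}\Lambda_1)$ with gauge-equivalent images, induction and a gauge lift reduce us to the case $\alpha_1\equiv\alpha_1''\pmod{F^n\Lambda_1}$. Their difference $\delta\in G^n_1$ is a $1$-cocycle whose image class in $H^1(G^n_2)$ vanishes (because $f(\alpha_1)$ and $f(\alpha_1'')$ are related by a gauge transformation that, after reduction, can be arranged to lie in $\exp(G^n_2)^0$); the isomorphism $H^1(G^n_1)\cong H^1(G^n_2)$ then produces $X\in G^n_1$ with $dX=\delta$, and $\exp(-X)$ supplies the required gauge equivalence.

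The principal obstacle I anticipate is the bookkeeping of lifts across the tower: one must verify that the obstruction classes in $H^2(G^n_1)$ and $H^1(G^n_1)$ are genuinely well-defined independently of the chosen set-theoretic lifts, and that every gauge transformation used to align representatives at level $n$ can be extended to level $n+1$ without disturbing the previously established matching (in particular that residual discrepancies always genuinely land in the abelian piece $G^n$ where the cohomological isomorphism can be applied). Once this bookkeeping is in hand, the quasi-isomorphism on associated graded complexes kills all obstructions uniformly, and the finite length of the filtration guarantees that the induction terminates after finitely many steps.
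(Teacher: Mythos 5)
First, a point of comparison: the paper does not actually prove this statement --- it is imported verbatim from Schlessinger--Stasheff (Theorem 5.3) and Goldman--Millson (Theorem 2.4) --- so there is no internal proof to measure you against. Your overall strategy (induction up the tower $\Lambda_k/F^{n+1}\Lambda_k$, exploiting that $G^n_k=F^n\Lambda_k/F^{n+1}\Lambda_k$ is abelian and central and that $f$ is a quasi-isomorphism on these pieces) is the standard route taken in those references, and your surjectivity step is essentially sound: the obstruction class $[F(\alpha_1')]\in H^2(G^n_1)$ is well defined, maps to $[d\gamma]=0$ in $H^2(G^n_2)$, and the residual cocycle discrepancy is corrected using surjectivity of $H^1(G^n_1)\to H^1(G^n_2)$.

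The injectivity step, however, has a genuine gap at the assertion that the gauge transformation relating $f(\alpha_1)$ and $f(\alpha_1'')$ ``can be arranged to lie in $\exp (G^n_2)^0$.'' Write $f(\alpha_1'')=(\exp Y)\cdot f(\alpha_1)$ with $Y\in(\Lambda_2/F^{n+1}\Lambda_2)^0$. Reducing mod $F^n$ only tells you that $\bar Y$ infinitesimally \emph{stabilizes} $f(\bar\alpha_1)$, i.e.\ $\diff\bar Y+[f(\bar\alpha_1),\bar Y]=0$; it does not force $\bar Y$ into $G^n_2$, and in general it cannot be pushed there. Unwinding the gauge action (and using $[\,Y,F^n]\subset F^{n+1}=0$) gives exactly $f(\delta)=-\bigl(\diff Y+[f(\alpha_1),Y]\bigr)$, which is a coboundary for the \emph{twisted} differential $\diff+\ad f(\alpha_1)$ on $\Lambda_2/F^{n+1}\Lambda_2$ but need not equal $\diff X$ for any $X\in (G^n_2)^0$; hence $[f(\delta)]$ need not vanish in $H^1(G^n_2)$. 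Already for $f=\id$ the claimed vanishing fails: take $F^3\Lambda=0$ and $Y\in\Lambda^0$ with $0\neq \diff Y\in F^2\Lambda$ but $\diff Y$ not exact in $F^2\Lambda$; then $0$ and $(\exp Y)\cdot 0=-\diff Y$ are gauge-equivalent Maurer--Cartan elements whose difference represents a nonzero class in $H^1(G^2)$, so your criterion would wrongly get stuck. The standard repair is to strengthen the induction: either establish an equivalence of Deligne groupoids at each stage (so that stabilizers are controlled --- this is where the degree-$0$ part of the graded quasi-isomorphism hypothesis is genuinely used), or note that $\ad\alpha_1$ raises filtration degree, so $f$ is a filtered quasi-isomorphism between the complexes twisted by $\alpha_1$ and by $f(\alpha_1)$, and extract from injectivity of $H^1$ of these twisted complexes a primitive $X$ with $\diff X+[\alpha_1,X]=-\delta$, which supplies the required gauge equivalence $\alpha_1''=(\exp X)\cdot\alpha_1$. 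With that correction the induction closes; as written it does not.
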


\section{MC higher products}\label{sec:mcp}
In this section, we introduce the notion of MC higher products, and prove some basic properties.

Let $  0 \to Z  \to \widetilde L \xrightarrow{\pi} L \to 0$
be a central extension of DGLAs concentrated in nonpositive degrees. We denote the differential of $L$ (resp. $\widetilde L$) by $\diff_L$ (resp. $\tilde{\diff}_L$).

\begin{definition}\label{def:ce}
An MC-product data consists of the following:
\begin{enumerate}[$(a)$]

\item a central extension $  0 \to Z  \to \widetilde L \xrightarrow{\pi} L \to 0$
of nilpotent DGLAs concentrated in nonpositive degrees such that $Z\subset \Ker(\tilde{\diff}_L)$ and $\Ran(\tilde{\diff}_L) \cap Z = 0$, where $\Ker(\tilde{\diff}_L)$ is the kernel of $\tilde{\diff}_L$  and $\Ran(\tilde{\diff}_L)$ is the image  of $\Ran(\tilde{\diff}_L)$;

\item an isomorphism of graded vector spaces $Z \cong \mathbb{Q}[q]$, where $q$ is a nonpositive integer and $\mathbb{Q}[q]$ stands for a copy of $\mathbb Q$ placed at degree $q$.
\end{enumerate}
\end{definition}

For every commutative DGA  $(A, d_A)$ and every DGLA $\left(L, \diff_L \right)$, there is natural DGLA $(A \otimes L, \diff)$ defined as follows:
\begin{enumerate}[(1)]
\item $[b_1\otimes w_1, b_2\otimes w_2] \coloneqq (-1)^{|w_1|\cdot |b_2|} (b_1b_2)\otimes [w_1, w_2]$,
\item $\diff (b\otimes w) \coloneqq  (\diff_A b)\otimes w + (-1)^{|b|} b\otimes (\diff_L w)$,
\end{enumerate}
for all $b, b_i\in A$ and $w, w_i\in L$.

\begin{definition} Given an MC-product data
	\[ \lambda = (Z\rightarrowtail \widetilde L \xrightarrowdbl{\pi} L, \diff_L, \tilde \diff_L)\] as in Definition \ref{def:ce}, a \emph{defining system} with respect to $\lambda$ is a  Maurer-Cartan element  in $A^+ \otimes L$, where $A^+ = \oplus_{i>0} A^{i}$.
\end{definition}

Let $\alpha$ be a defining system in $A^+ \otimes L$. Lift $\alpha$ to a degree one element $\tilde{\alpha}$ in   $ A^+ \otimes \widetilde{L}$.  We have  $(\id \otimes \pi) (F( \tilde{\alpha}))= F(\alpha) =0$ in $ A^+ \otimes L$.
It follows that $F(\tilde{\alpha}) \in \Ker (\id \otimes \pi) =A^+ \otimes Z \cong A^+[q]$, where $ A^+[q]$ is the $q$-shift of the complex $A^+$, that is, $ (A^+[q])^k = (A^+)^{k+q}$.

\begin{lemma}With the same notation as above, we have
\begin{equation*}
\diff F(\tilde{\alpha})=0.
\end{equation*}
\end{lemma}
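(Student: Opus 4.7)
The plan is to derive the identity from the Bianchi identity applied in the DGLA $A^+ \otimes \widetilde{L}$, combined with the centrality of $Z$ in $\widetilde{L}$.

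First I would recall that since $\tilde{\alpha}$ is a degree one element in the DGLA $A^+ \otimes \widetilde{L}$, the Bianchi identity recalled in Section~\ref{sec:mc} yields
\[
\diff F(\tilde{\alpha}) + [\tilde{\alpha}, F(\tilde{\alpha})] = 0.
\]
So it suffices to show that the bracket term vanishes. This is where the hypothesis that $Z$ is central comes in: the paragraph preceding the lemma already establishes that $F(\tilde{\alpha}) \in A^+ \otimes Z$. Writing $F(\tilde{\alpha}) = \sum_j c_j \otimes z_j$ with $z_j \in Z$ and $\tilde{\alpha} = \sum_i b_i \otimes w_i$ with $w_i \in \widetilde{L}$, the bracket formula for $A^+ \otimes \widetilde{L}$ gives
\[
[\tilde{\alpha}, F(\tilde{\alpha})] = \sum_{i,j} (-1)^{|w_i|\,|c_j|}\, (b_i c_j)\otimes [w_i, z_j].
\]
Since each $z_j$ lies in the center $Z$ of $\widetilde{L}$, every bracket $[w_i, z_j]$ is zero, so $[\tilde{\alpha}, F(\tilde{\alpha})] = 0$ and the lemma follows from the Bianchi identity.

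There is no real obstacle here; the only thing to double-check is the sign convention in the Bianchi identity on $A^+ \otimes \widetilde{L}$ and the fact that centrality of $Z$ in $\widetilde{L}$ passes to centrality of $A^+ \otimes Z$ inside $A^+ \otimes \widetilde{L}$, which is immediate from the definition of the bracket on the tensor product DGLA. As a sanity check, one can also verify the conclusion directly: since $Z \subset \Ker(\tilde{\diff}_L)$, the restriction of the total differential $\diff$ to $A^+ \otimes Z$ acts only on the $A^+$ factor, so $\diff F(\tilde{\alpha}) = 0$ is equivalent to the statement that the corresponding element of $A^+[q]$ is closed, which will be useful later for defining MC higher products as cohomology classes.
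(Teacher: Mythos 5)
Your proof is correct and is essentially identical to the paper's: both deduce $\diff F(\tilde\alpha) = -[\tilde\alpha, F(\tilde\alpha)]$ from the Bianchi identity and then kill the bracket using the fact that $F(\tilde\alpha)\in A^+\otimes Z$ is central. You simply spell out the centrality computation in the tensor-product DGLA in more detail than the paper does.
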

\begin{proof}
	$F(\tilde \alpha)$ is a central element, since $F(\tilde{\alpha}) \in \Ker (\id \otimes \pi)$.
It follows that
\begin{equation*}  \diff F(\tilde \alpha)  = - [\tilde \alpha, F(\tilde \alpha)] = 0. \end{equation*}
\end{proof}
\begin{lemma}
The cohomology class of $F(\tilde{\alpha})$ depends only on the defining system $\alpha$.
\end{lemma}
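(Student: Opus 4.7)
The plan is to show that if $\tilde\alpha_1$ and $\tilde\alpha_2$ are two lifts of the defining system $\alpha$ to $A^+ \otimes \widetilde L$, then $F(\tilde\alpha_1)$ and $F(\tilde\alpha_2)$ differ by a coboundary in the subcomplex $A^+ \otimes Z \cong A^+[q]$. First, I would set $\beta := \tilde\alpha_1 - \tilde\alpha_2$. Both $\tilde\alpha_i$ are degree one elements mapping to $\alpha$ under $\id \otimes \pi$, so $\beta \in \ker(\id \otimes \pi) = A^+ \otimes Z$.

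Next, I would exploit centrality. Since $Z$ lies in the center of $\widetilde L$, every element of $A^+ \otimes Z$ is central in $A^+ \otimes \widetilde L$; in particular $[\tilde\alpha_2, \beta] = 0$ and $[\beta, \beta] = 0$ (the bracket of any two elements of $A^+ \otimes Z$ vanishes). Expanding directly,
\begin{equation*}
F(\tilde\alpha_1) = F(\tilde\alpha_2 + \beta) = F(\tilde\alpha_2) + d\beta + [\tilde\alpha_2, \beta] + \tfrac{1}{2}[\beta,\beta] = F(\tilde\alpha_2) + d\beta.
\end{equation*}

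Finally, I need to check that $d\beta$ actually lives in the subcomplex $A^+ \otimes Z$ so that the equality of cohomology classes is meaningful. For $\beta = \sum a_i \otimes z_i$ with $a_i \in A^+$ and $z_i \in Z$, the Leibniz rule gives $d\beta = \sum (d_A a_i) \otimes z_i \pm \sum a_i \otimes \tilde d_L z_i$. The hypothesis $Z \subseteq \ker \tilde d_L$ kills the second sum, while $d_A a_i \in A^+$ (degrees strictly increase), so $d\beta \in A^+ \otimes Z$. Consequently $[F(\tilde\alpha_1)] = [F(\tilde\alpha_2)]$ in the cohomology of $A^+ \otimes Z$.

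There is no substantive obstacle here; the entire argument rests on the centrality of $Z$ and its compatibility with the differential. The only small point that deserves care is verifying that $[\beta,\beta] = 0$ even though $\beta$ is of odd total degree (a priori such a bracket need not vanish), which is immediate once one writes out the bracket on $A^+ \otimes \widetilde L$ and uses $[z,z'] = 0$ for $z,z' \in Z$.
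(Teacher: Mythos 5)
Your proof is correct and follows essentially the same route as the paper's: write the difference of two lifts as an element of $A^+\otimes Z = \Ker(\id\otimes\pi)$, use centrality of $Z$ to kill the bracket terms in the expansion of $F$, and conclude the curvatures differ by a coboundary. You are slightly more careful than the paper in two places (allowing a general element of $A^+\otimes Z$ rather than an elementary tensor, and checking that $d\beta$ stays in the subcomplex via $Z\subseteq\Ker\tilde\diff_L$), but the argument is the same.
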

\begin{proof}
Suppose  $\tilde \alpha_1\in A^+ \otimes \widetilde{L}$ is another lift of $\alpha$. Then $ \tilde \alpha_1 - \alpha = b\otimes z$, for some $b\in A^+$ and $ z \in Z$. Then we have
 \begin{align*}
  F(\tilde \alpha_1) & = \diff\tilde \alpha_1 + \frac{1}{2}[\tilde \alpha_1, \tilde \alpha_1] \\
& = \diff \tilde \alpha + \diff b \otimes z  + \frac{1}{2}[\tilde \alpha + b \otimes z, \tilde \alpha + b\otimes z] \\
& = \diff  \tilde \alpha + \frac{1}{2}[\tilde \alpha , \tilde \alpha] + \diff b \otimes z = F(\tilde \alpha) + \diff b \otimes z.
 \end{align*}
This finishes the proof.
\end{proof}

Therefore,  for each defining system $\alpha\in A^+\otimes L$, there is a well-defined cohomology class $m(\alpha)\coloneqq [F(\tilde{\alpha})]\in H^{2-q}(A)$.
\begin{definition}
We call $m(\alpha)$ the MC higher product
of the defining system $\alpha \in A^+\otimes L$.
\end{definition}
The following lemma states that MC higher products are invariant under gauge transformations.
\begin{lemma}\label{gauge} For every $X\in (A^+\otimes L)^{0}$, we have
\begin{equation*} m(\exp X \cdot \alpha)= m(\alpha). \end{equation*}
\end{lemma}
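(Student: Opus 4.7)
The plan is to exploit the naturality of the construction under lifting, combined with the fact that $F(\tilde\alpha)$ is central. First I would lift $X \in (A^+ \otimes L)^0$ to an element $\tilde X \in (A^+ \otimes \widetilde L)^0$. This is possible because $\pi \colon \widetilde L \to L$ is surjective in every degree, hence so is $\id \otimes \pi \colon A^+ \otimes \widetilde L \to A^+ \otimes L$, and the degree-zero piece surjects onto the degree-zero piece.

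Next, I would form the gauge-transformed element $\tilde\beta \coloneqq (\exp \tilde X)\cdot \tilde\alpha \in A^+\otimes \widetilde L$ and observe that $(\id\otimes \pi)(\tilde\beta) = (\exp X)\cdot \alpha$, so $\tilde\beta$ is a legitimate lift of $(\exp X)\cdot \alpha$ to $A^+ \otimes \widetilde L$. By the identity recalled in Section \ref{sec:mc}, namely $F((\exp \tilde X)\cdot \tilde\alpha) = e^{\ad \tilde X}\, F(\tilde\alpha)$, the curvature of the new lift is simply the image of $F(\tilde\alpha)$ under the exponentiated adjoint action.

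The key observation is now that $F(\tilde\alpha)$ lies in $A^+ \otimes Z$, and because $Z$ is central in $\widetilde L$ the subspace $A^+ \otimes Z$ is central in $A^+ \otimes \widetilde L$: for any $a\otimes z$ with $z\in Z$ and any $b\otimes w$ with $w\in \widetilde L$, the bracket $[b\otimes w, a\otimes z]$ is proportional to $[w,z]=0$. Therefore $\ad \tilde X$ annihilates $F(\tilde\alpha)$, which gives $e^{\ad \tilde X}F(\tilde\alpha)=F(\tilde\alpha)$, and hence $F(\tilde\beta)=F(\tilde\alpha)$ as elements of $A^+\otimes Z \cong A^+[q]$. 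Passing to cohomology classes yields $m((\exp X)\cdot \alpha) = m(\alpha)$.

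There is essentially no serious obstacle; the one point requiring a touch of care is the existence of the lift $\tilde X$ and the verification that $A^+ \otimes Z$ sits centrally inside $A^+ \otimes \widetilde L$, both of which are immediate from the hypotheses on the MC-product data. The argument also makes it clear that the cohomology class $m(\alpha)$ depends only on the gauge equivalence class of $\alpha$, which is the property that will be needed to define MC higher products on $MC(A^+\otimes L)$.
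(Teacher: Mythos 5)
Your argument is correct and is essentially identical to the paper's own proof: lift $X$ to $\widetilde X$, observe that $(\exp\widetilde X)\cdot\tilde\alpha$ lifts $(\exp X)\cdot\alpha$, apply $F((\exp\widetilde X)\cdot\tilde\alpha)=e^{\ad\widetilde X}F(\tilde\alpha)$, and use the centrality of $F(\tilde\alpha)\in A^+\otimes Z$ to conclude. The extra details you supply (surjectivity of $\id\otimes\pi$ in degree zero, centrality of $A^+\otimes Z$) are exactly the points the paper leaves implicit.
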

 \begin{proof}
 Let $\widetilde{X}\in \widetilde{L} $ be a lift of $X$. Clearly, $\exp \widetilde{X} \cdot \tilde{\alpha}$ is a lift of $\exp X \cdot \alpha$. Then we have
 \[ F(\exp \widetilde{X} \cdot \tilde{\alpha}) = e^{\ad \widetilde{X}} F(\tilde{\alpha})= F(\tilde{\alpha}). \] The second equality follows from the fact that $F(\tilde{\alpha})$ is central.
 \end{proof}

\begin{definition} Let $\lambda = (Z\rightarrowtail \widetilde L \xrightarrowdbl{\pi} L, \diff_L, \tilde \diff_L)$ be an MC-product data.
 For a  commutative DGA $A$, we denote by  $MP_{\lambda}(A)$ the subset of $H^\bullet(A)$ consisting of  MC higher products  $m(\alpha)$, where $\alpha$ runs through all defining system with respect to $\lambda$.
\end{definition}

It is clear that any morphism
$f \colon A \to B$ of commutative DGAs induces a map $f_* \colon MP_\lambda(A)  \to MP_\lambda(B)$.
\begin{proposition}\label{gm}
If  $f \colon A \to B$ is a quasi-isomorphism of connected commutative  DGAs, then $f_* \colon MP_\lambda(A)  \to MP_\lambda(B)$ is a bijection.
\end{proposition}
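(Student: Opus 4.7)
The plan is to reduce the statement to the Equivalence Theorem (Theorem \ref{thm:GM}) applied to the two nilpotent DGLAs $A^+\otimes L$ and $B^+\otimes L$, connected by the morphism $f\otimes \id$. Once we know that $(f\otimes \id)_\ast \colon MC(A^+\otimes L)\to MC(B^+\otimes L)$ is a bijection, surjectivity of $f_\ast \colon MP_\lambda(A)\to MP_\lambda(B)$ will follow from the naturality of the MC higher product construction combined with the gauge invariance in Lemma \ref{gauge}. Injectivity is free: $f$ is a quasi-isomorphism, so $f_\ast\colon H^\bullet(A)\to H^\bullet(B)$ is an isomorphism, which is in particular injective on the subset $MP_\lambda(A)$.

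First I would set up the right finite length filtrations. Since $L$ is nilpotent, its lower central series $L=L^1\supseteq L^2\supseteq\cdots$ terminates, and I define $F^i(A^+\otimes L)\coloneqq A^+\otimes L^i$, similarly for $B^+\otimes L$. The inclusions $[F^i,F^j]\subseteq F^{i+j}$ follow from $[L^i,L^j]\subseteq L^{i+j}$, and the filtration has finite length; the map $f\otimes \id$ is clearly filtered. On the associated graded, the DGLA becomes an abelian DG vector space of the form $A^+\otimes (L^i/L^{i+1})$, and the map induced by $f\otimes \id$ is again just $f\otimes \id$. Connectedness of $A$ and $B$ (so that $A^0=B^0=\mathbb{Q}\cdot 1$) ensures that $f\colon A^+\to B^+$ remains a quasi-isomorphism; since we are working over $\mathbb{Q}$, tensoring with the complex $L^i/L^{i+1}$ preserves this, so $f\otimes \id$ is a quasi-isomorphism on the associated graded. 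Theorem \ref{thm:GM} then delivers the bijection $(f\otimes \id)_\ast \colon MC(A^+\otimes L)\to MC(B^+\otimes L)$.

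Next I would establish naturality of $m$ under $f\otimes \id$: for any defining system $\alpha\in A^+\otimes L$ with lift $\tilde\alpha\in A^+\otimes\widetilde L$, the element $(f\otimes \id)(\tilde\alpha)\in B^+\otimes\widetilde L$ lifts $(f\otimes \id)(\alpha)$, and
\[
F\bigl((f\otimes \id)(\tilde\alpha)\bigr)=(f\otimes \id)\bigl(F(\tilde\alpha)\bigr),
\]
so $m((f\otimes \id)(\alpha))=f_\ast(m(\alpha))$. To deduce surjectivity, pick $m(\alpha')\in MP_\lambda(B)$ with defining system $\alpha'\in B^+\otimes L$. By the previous step there exist $\alpha\in A^+\otimes L$ Maurer-Cartan and $Y\in (B^+\otimes L)^0$ with $\alpha'=\exp(Y)\cdot (f\otimes \id)(\alpha)$; by Lemma \ref{gauge} and naturality,
\[
m(\alpha')=m\bigl((f\otimes \id)(\alpha)\bigr)=f_\ast(m(\alpha)),
\]
which exhibits $m(\alpha')$ as the image of an element of $MP_\lambda(A)$.

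The main obstacle I anticipate is purely bookkeeping: verifying that $f\otimes \id$ is a quasi-isomorphism on the associated graded, which is where the hypothesis that $A$ and $B$ are connected is essential (to guarantee $f\colon A^+\to B^+$ is a quasi-isomorphism, not merely that $f\colon A\to B$ is), and where working over a field is used. After that, the argument is a formal consequence of Theorem \ref{thm:GM}, Lemma \ref{gauge}, and the naturality of $F$ under DGLA morphisms.
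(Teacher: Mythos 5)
Your proposal is correct and follows essentially the same route as the paper: injectivity from the injectivity of $f_\ast$ on cohomology, and surjectivity via the lower central series filtration on $A^+\otimes L$ and $B^+\otimes L$, Theorem \ref{thm:GM}, naturality of $m$, and the gauge invariance of Lemma \ref{gauge}. You have simply spelled out the details (the check on the associated graded and the role of connectedness) that the paper leaves implicit.
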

\begin{proof}
The injectivity of $f_*$ follows from the injectivity of the induced map $H^\bullet(A) \to H^\bullet(B)$. To prove surjectivity
consider    the filtration of $L$ by the central series $\{L^i\}$, where  $L^i=[L, L^{i-1}]$ and $L^1=L$. We filter $A^+ \otimes L$ by $F^i(A^+ \otimes L)=A^+ \otimes L^i$ and   $B^+\otimes L$ by $F^i(B^+ \otimes L)=B^+ \otimes L^i$.
Now surjectivity follows   from  Theorem $\ref{thm:GM}$ and Lemma \ref{gauge}.
\end{proof}

\begin{example}\label{ex:Massey} Let $\{a_i\}_{1\leq i \leq n}$ be a collection of elements  of a commutative DGA $A$ such that $\diff a_i=0$.

Consider the Lie algebra $L$ generated by $\varepsilon_i$ with  the relations $[\varepsilon_i, \varepsilon_j]=0$ unless $|i-j|=1$, and  commutators of order $\ge n$ are $0$. Here we set $|\varepsilon_i|=1-|a_i|$. We equip $L$ with the zero differential.   Let $\widetilde{L} = L \oplus \mathbb{Q}\eta$, where $\mathbb Q\eta$ is the one dimensional Lie algebra generated by $\eta = [\varepsilon_1, [\varepsilon_2,[\ldots,[\varepsilon_{n-1}, \varepsilon_n]\ldots]$.  Then $\lambda = (\mathbb Q\eta \rightarrowtail \widetilde L \twoheadrightarrow L, 0, 0)$ is an MC-product data.  Consider an element $\alpha \in A\otimes L$ of the form
\begin{equation*}  \alpha = \sum_{i=1}^{n}a_i \otimes \varepsilon_i + \sum_{\substack{1\leq i<j \leq n \\ (i, j)\neq (1, n)} } a_{(i, j)} \otimes  [\varepsilon_i, [\varepsilon_{i+1},[\ldots,[\varepsilon_{j-1}, \varepsilon_{j}]\ldots]  \end{equation*}
where $ a_{(i, j)}\in A$. Suppose  $\alpha$ is a defining system with respect to $\lambda$. It is not difficult to see that the equation
\begin{equation*}  F(\alpha)  = \diff \alpha + \frac{1}{2}[\alpha, \alpha] = 0 \end{equation*}
gives precisely the standard relations in Massey product $\langle a_1, a_2, \ldots, a_n\rangle$,  cf. \cite{MR0098366}. Moreover, if $\tilde \alpha\in A^+\otimes \widetilde L$ is a lift of $\alpha$, then $F(\tilde \alpha)$ is precisely a Massey product of  $\{a_1, a_2, \ldots, a_n\}$.
\end{example}
\begin{remark} The close connection between defining systems and twisting cochains has been noticed in \cite{MR0394720}, Remark 5.5.
\end{remark}
The following lemma will be useful later.
 \begin{lemma}\label{lm:formal} Let $(B, d)$ be a formal commutative DGA. Then for any MC product data $\lambda = (Z\rightarrowtail \widetilde L \xrightarrowdbl{\pi} L, \diff_L, \tilde \diff_L)$, every element in $MP_\lambda(B)$ is a finite sum of  decomposable elements in $H^\bullet(B)$.
 \end{lemma}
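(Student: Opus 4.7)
The plan is twofold: first reduce to the case $d_B = 0$ via Proposition \ref{gm}, then exploit the structural assumption $\Ran(\widetilde{d}_L) \cap Z = 0$ to kill a problematic linear term by a judicious change of splitting. For the reduction, formality of $B$ gives a zigzag of quasi-isomorphisms of connected commutative DGAs connecting $B$ to $(H^\bullet(B), 0)$; since Proposition \ref{gm} turns each arrow into a bijection on MC products that is the restriction of an algebra isomorphism on cohomology (and algebra isomorphisms preserve decomposability), it suffices to treat the case $B = H := H^\bullet(B)$ with zero differential.

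In that case, fix a defining system $\alpha = \sum_\mu b_\mu \otimes e_\mu \in H^+ \otimes L$ and a graded-linear splitting $s \colon L \to \widetilde{L}$ of the central extension. Introduce the deviation and the $2$-cocycle
\[ \delta(e) := \widetilde{d}_L s(e) - s(d_L e) \in Z, \qquad \omega(e, e') := [s(e), s(e')] - s([e, e']) \in Z. \]
Setting $\widetilde\alpha := (\id \otimes s)(\alpha)$, a direct computation using $d_B = 0$ and $F(\alpha) = 0$ yields
\[ F(\widetilde\alpha) = \sum_\mu (-1)^{|b_\mu|}\, b_\mu \otimes \delta(e_\mu) \;+\; \tfrac{1}{2}\sum_{\mu, \nu} (-1)^{|e_\mu||b_\nu|}\, b_\mu b_\nu \otimes \omega(e_\mu, e_\nu). \]
The second sum manifestly lies in $(H^+)^2 \otimes Z$ and is already a sum of decomposables. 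The first sum is the obstacle: a priori it is only linear in the $b_\mu$'s.

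The key observation is that the hypothesis $\Ran(\widetilde{d}_L) \cap Z = 0$ forces $\delta$ to vanish on $K := \ker d_L$: for $e \in K$, the element $\widetilde{d}_L s(e)$ projects to $d_L e = 0$ and therefore lies in $Z \cap \Ran(\widetilde{d}_L) = 0$. Since $d_L$ induces an isomorphism $L/K \xrightarrow{\sim} \Ran(d_L)$, one constructs a degree-zero linear map $u \colon L \to Z$ with $u \circ d_L = \delta$ (set $u(d_L e) := \delta(e)$ on $\Ran(d_L)$ and extend by zero on a chosen complement). Replacing $s$ by $s' := s + u$, the new deviation is $\delta' = \delta - u \circ d_L = 0$, and the new lift $\widetilde\alpha' = \widetilde\alpha + \sum_\mu b_\mu \otimes u(e_\mu)$ differs from $\widetilde\alpha$ by an element of $H^+ \otimes Z$. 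Using $d_B = 0$, $\widetilde{d}_L|_Z = 0$, and the centrality of $Z$, this difference has zero differential and zero bracket with $\widetilde\alpha$, so $F(\widetilde\alpha') = F(\widetilde\alpha)$ as elements of $H^+ \otimes Z$. Recomputing $F(\widetilde\alpha')$ with the splitting $s'$ kills the first sum, leaving only the (manifestly decomposable) $\omega'$-sum, so $m(\alpha) \in (H^+)^2$.

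The main obstacle is recognizing that $\Ran(\widetilde{d}_L) \cap Z = 0$ is precisely what enables the splitting-rescaling trick to eliminate $\delta$; once this is seen, the remaining computations are routine. A secondary point requiring care is that $F(\widetilde\alpha') = F(\widetilde\alpha)$ is an equality of elements (not merely of cohomology classes), which is special to the formal setting.
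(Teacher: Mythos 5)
Your argument is correct, and its first step --- reducing to the case $\diff_B=0$ via formality and Proposition \ref{gm} --- is exactly the paper's. Where you diverge is in the mechanism for exploiting the hypothesis $\Ran(\tilde{\diff}_L)\cap Z=0$. You fix a linear section $s\colon L\to\widetilde L$ of $\pi$, expand $F(\widetilde\alpha)$ into a linear term governed by the deviation $\delta=\tilde{\diff}_L s-s\diff_L$ plus a quadratic term governed by the cocycle $\omega$, and then correct $s$ by a map $u$ with $u\circ\diff_L=\delta$ to kill the linear term. The paper instead chooses a linear retraction $p\colon\widetilde L\to Z$ with $p|_Z=\id$ and $p|_{\Ran(\tilde{\diff}_L)}=0$ (well defined precisely because $\Ran(\tilde{\diff}_L)\cap Z=0$); since $F(\tilde\alpha)\in B^+\otimes Z$ and, with $\diff_B=0$, $\diff\tilde\alpha\in B^+\otimes\Ran(\tilde{\diff}_L)$, applying $\id\otimes p$ gives $F(\tilde\alpha)=\tfrac12\,p([\tilde\alpha,\tilde\alpha])$ in one line, which is manifestly a sum of decomposables. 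The two devices are dual: a corrected section $s'$ with vanishing deviation is the same datum as a complement of $Z$ containing $\Ran(\tilde{\diff}_L)$, i.e.\ the kernel of such a $p$, so the hypothesis is used identically in both proofs; the retraction formulation merely saves the bookkeeping with $\delta$, $\omega$, and the change of lift. One point in your write-up worth highlighting as genuinely careful (and correct) is the observation that, once $\diff_B=0$, changing the lift by an element of $B^+\otimes Z$ leaves $F$ unchanged as an element, not merely as a cohomology class; this is what legitimizes the splitting adjustment.
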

 \begin{proof} By Proposition \ref{gm}, without loss of generality, we can assume that $B$ has a zero differential. Choose a splitting DGLA morphism $p \colon \widetilde{L} \to Z$ of the inclusion $\iota: Z\rightarrowtail \widetilde{L}$ such that $p \circ \tilde \diff_L =0$, where $\tilde \diff_L$ is the differential on $\widetilde L$. For example, let us define $p|_{\Ran(\tilde \diff_L)} = 0 $ and $p|_Z = \textup{Id}|_Z$. This is well-defined on $\Ran(\tilde \diff_L)+Z$, since $\Ran (\tilde \diff_L) \cap Z = 0$. Now take an arbitrary linear extension of  $p$ from $\Ran(\tilde \diff_L)+Z$ to $\widetilde L$.
 	
 	Suppose  $\alpha \in B^+ \otimes L$ is a defining system with respect to $\lambda$, and
 $\tilde{\alpha}\in B^+\otimes \widetilde L$ is a lift of $\alpha$. Since the differential on $B$ is zero, we have
 	\begin{equation*} m(\alpha) = [F(\tilde{\alpha})]= [p(F(\tilde{\alpha}))]= \frac{1}{2} [p([\tilde{\alpha}, \tilde{\alpha}])]. \end{equation*}
 	The last expression
 	is a finite sum of decomposable elements in $B^+$. This completes the proof.
 \end{proof}

\begin{remark}\label{GF}
Constructions of this section are special cases of the following. Assume that we have the following data:
\begin{itemize}
\item $(A, d_A)$ -- a connected commutative DGA (in degrees $\ge 0$);
\item $(L, d_L)$ -- a DGLA concentrated in degrees $\le 0$;
\item $\mu$ -- an MC element in $A^+ \otimes L$.
\end{itemize}
Consider $C^\bullet_{Lie}(L) = (\Lambda^\bullet L^*, \delta_{Lie}+d_L) $ -- the standard Lie cohomology complex of DGLA $L$  with the differentials
being the Lie cohomology coboundary $\delta_{Lie}$ (up to a sign) and $d_L$.
Let $\eta \in C^\bullet_{Lie}(L)$. Extend $\eta$ to a map
\[
\eta \colon C^\bullet_{Lie}(A \otimes L) \to A
\]
by $ \eta(a_1\otimes l_1, \ldots, a_k\otimes l_k) = (-1)^\varepsilon a_1\ldots a_k \, \eta(l_1, \ldots, l_k),$ where
\[
\varepsilon =|\eta|\sum |a_i|+\sum_{i>j} |a_i|(|l_j|+1) \text{ and } |\eta|=\sum (|l_i|+1).
\]
Then the map
$
\gamma_\mu \colon C^\bullet_{Lie}(L) \to A
$
  given by
\[
\gamma_\mu(\eta) :=   \frac{1}{k!} \eta(\mu, \mu, \ldots, \mu), \ \text{ if } \eta \in \Lambda^kL^*,
\]
 is a morphism of complexes, i.e.
\[
\diff_A(\gamma_\mu(\eta)) = \gamma_\mu((\delta_{Lie} + \diff_L) \eta).
\]
As a consequence,   we obtain a \emph{characteristic map}
\begin{equation}
\gamma_\mu \colon H^\bullet_{Lie}(L) \to H^\bullet(A).
\end{equation}
This map depends only on the gauge equivalence class of $\mu$.
Indeed, let $X \in (A^+ \otimes L)^0$ and let $\mu_0$ and $\mu_1$ be two MC elements in $A^+ \otimes L$ with $\mu_1 = (\exp X) \cdot \mu_0$.
Then we have
\[
\gamma_{\mu_1}(\eta) -\gamma_{\mu_0}(\eta) =(-1)^{|\eta|}\big( \diff_A H(\eta) - H\big( ( \delta_{Lie} + \diff_L ) \eta\big)\big)
\]
where
\[
H(\eta) = \frac{1}{(k-1)!}\int_0^1 \eta (X, \mu_t, \mu_t, \dots, \mu_t) dt, \textup{ with } \ \mu_t : = (\exp tX)\cdot \mu_0.
\]

\begin{example}
 Suppose we are given a commutative DGA $(A, \diff_A)$,  an MC-product data $ \lambda = (Z\rightarrowtail \widetilde L \xrightarrowdbl{\pi} L, \diff_L, \tilde \diff_L)$ and a Maurer-Cartan element $\mu \in A^+\otimes L$.   The central extension $Z\rightarrowtail \widetilde L \xrightarrowdbl{\pi} L$ defines a class in $H^\bullet_{Lie}(L)$ (its degree is the degree of the center shifted by $2$). The image of this class (under the characteristic map $\gamma_{\mu}$ above) in $H^\bullet(A)$ is precisely the MC higher product $m(\mu)$.
\end{example}

\begin{example} The data described in the beginning of Reamrk \ref{GF} arises naturally, for example, in the following situation.
Let $(A, d_A)$ be a commutative DGA   and $(A\otimes \Lambda V, D)$ be an algebraic fibration with $\Lambda V$ of finite type. Denote by $d_V$ the differential on $\Lambda V$. Let $L$ be a DGLA of filtered derivations of $\Lambda V$,  truncated  at degree $0$, and its  differential  of degree $+1$ given by $d_L:=[d_V, \cdot]$.
Then  $D=d_A \otimes 1+ 1\otimes d_V + \mu$, where $\mu$ can be thought of as a Maurer-Cartan element of DGLA $A^+ \otimes L $, cf. \cite[Theorem 9.2]{Schlessinger:2012kq}.
In this situation the gauge equivalence of MC elements corresponds to an isomorphism of fibrations.
\end{example}
\end{remark}

\section{Main theorem}\label{sec:main}

\begin{definition}
	Let $A$ be a connected commutative DGA. $\ann(A)$ denotes the set of elements $a\in H^\bullet(A)$ for which there exists an algebraic fibration whose fiber has finite cohomological dimension
	$ \iota\colon (A, \diff)  \hookrightarrow (A\otimes \Lambda V, \diff )$
	such that $\iota_\ast(a) = 0$ in $H^\bullet(A\otimes \Lambda V)$.
\end{definition}


It is clear that $\ann(A)$ is an ideal of $H^\bullet(A)$. For the next Proposition recall the iterated odd algebraic spherical   fibration $\tau \colon(A, \diff) \hookrightarrow  (TA, \diff) $ from Theorem \ref{thm:itoddsphere}.
\begin{proposition}\label{characterisations} Let $a \in H^\bullet(A)$. Then the following are equivalent:
 \begin{enumerate}[$(1)$]
   \item \label{a} $a \in \ann(A)$.
   \item \label{b} $a \in \Ker \tau_*$.
   \item \label{c} There exists a finitely iterated odd algebraic spherical fibration $\iota \colon (A, \diff) \hookrightarrow (A\otimes \Lambda V, \diff)$ such that $\iota_*(a)=0$.
 \end{enumerate}
\end{proposition}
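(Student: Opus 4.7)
The plan is to prove the cycle $(3) \Rightarrow (1) \Rightarrow (2) \Rightarrow (3)$. The implication $(3) \Rightarrow (1)$ is immediate: a finitely iterated odd algebraic spherical fibration has fiber $\Lambda U$ with $\dim U < \infty$ and $U$ concentrated in odd degrees, so $\Lambda U$ is a finite-dimensional exterior algebra and certainly has finite cohomological dimension.

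For $(1) \Rightarrow (2)$ I would base change the given annihilating fibration $\iota \colon A \hookrightarrow A \otimes \Lambda V$ along $\tau \colon A \hookrightarrow TA$. Extending the differential of $A \otimes \Lambda V$ to $TA \otimes \Lambda V$ by the Koszul--Sullivan formula (using the same filtration $V = \bigcup V(n)$) produces an algebraic fibration $\iota' \colon TA \hookrightarrow TA \otimes \Lambda V$ whose fiber, viewed as a DGA with the induced differential, coincides with the fiber of $\iota$, hence still has finite cohomological dimension. There is an obvious commutative square
\begin{equation*}
\iota' \circ \tau \;=\; \tau' \circ \iota,
\end{equation*}
where $\tau' \colon A \otimes \Lambda V \hookrightarrow TA \otimes \Lambda V$ is the evident inclusion. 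Since $H^{2k}(TA) = 0$ for every $k > 0$ by Theorem \ref{thm:itoddsphere}, Theorem \ref{thm:inj} applies to $\iota'$ with $N$ arbitrarily large, so $\iota'_*$ is injective on $H^\bullet(TA)$ in every degree. Commutativity together with $\iota_*(a) = 0$ then gives $\iota'_*(\tau_*(a)) = \tau'_*(\iota_*(a)) = 0$, whence $\tau_*(a) = 0$.

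For $(2) \Rightarrow (3)$, pick a cocycle representative $\tilde a \in A$ of $a$. The hypothesis $\tau_*(a) = 0$ provides $b \in TA = A \otimes \Lambda V$ with $db = \tau(\tilde a)$. By Remark \ref{finite} there is a finite-dimensional graded subspace $U \subset V$ such that $b \in A \otimes \Lambda U$ and $A \otimes \Lambda U$ is a DG subalgebra of $TA$. Setting $U(n) := U \cap V(n)$ yields an exhausting filtration by finite-dimensional subspaces, and the DG-subalgebra property forces
\begin{equation*}
d(U(n)) \;\subset\; (A \otimes \Lambda V(n-1)) \cap (A \otimes \Lambda U) \;=\; A \otimes \Lambda U(n-1).
\end{equation*}
Thus $A \hookrightarrow A \otimes \Lambda U$ is a finitely iterated odd algebraic spherical fibration killing $a$, proving $(3)$.

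The step I expect to require the most care is $(1) \Rightarrow (2)$: one must verify that base change of the annihilating fibration along $\tau$ preserves both the algebraic-fibration structure and the finite cohomological dimension of the fiber, and one must justify applying Theorem \ref{thm:inj} uniformly in $N$ to conclude injectivity of $\iota'_*$ in all degrees. Once these routine checks are in place, the three implications close cleanly, and no use of MC higher products is needed at this stage.
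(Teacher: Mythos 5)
Your proof is correct and follows essentially the same route as the paper: the paper also establishes $(1)\Rightarrow(2)$ by forming the pushout fibration $TA \hookrightarrow TA\otimes\Lambda V$ and invoking Theorem \ref{thm:inj} (valid in all degrees since $H^{2k}(TA)=0$ for $k>0$), deduces $(2)\Rightarrow(3)$ from Remark \ref{finite}, and notes $(3)\Rightarrow(1)$ is immediate. Your write-up merely spells out the routine verifications (the induced filtration on $U$, the finite cohomological dimension of the base-changed fiber) that the paper leaves implicit.
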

\begin{proof}
 Assume $a \in H^\bullet(A)$ and $\iota \colon (A, d)  \hookrightarrow (A\otimes \Lambda V, d)$ is an algebraic fibration such that $\iota_*(a)=0$. Consider the pushout fibration $\varphi \colon TA \hookrightarrow TA \otimes \Lambda V$.  Then $(\varphi\circ \tau)_*(a)=0$. Since by Theorem \ref{thm:inj} $\varphi_*$ is injective, we obtain that $\tau_*(a)=0$. This proves that \eqref{a} implies \eqref{b}. The implication \eqref{b} $\Rightarrow$ \eqref{c} follows from Remark \ref{finite}. Finally, \eqref{a} is an obvious consequence of \eqref{c}.
\end{proof}

The main result of this paper is the following characterization of $\ann(A)$ in terms of Maurer-Cartan higher products.
\begin{theorem}\label{thm:main}

 Let $A$ be a connected commutative DGA. Then
\begin{equation*}  \ann(A) = \bigoplus_{k>0}H^{2k}(A) + \bigcup_{\lambda} MP_\lambda(A)  \end{equation*}
where $\lambda$ runs through all MC-product data.
\end{theorem}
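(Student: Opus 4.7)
The plan is to prove the two containments separately.

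For $\supseteq$, both summands are handled via Proposition \ref{characterisations}, which identifies $\ann(A)$ with $\ker \tau_*$. Classes in $\bigoplus_{k>0} H^{2k}(A)$ go to zero under $\tau_*$ because $H^{2k}(TA) = 0$ for $k > 0$ (Theorem \ref{thm:itoddsphere}). For an MC higher product $m(\alpha) \in MP_\lambda(A)$, naturality of the construction gives $\tau_*(m(\alpha)) = m((\tau \otimes 1)\alpha) \in MP_\lambda(TA)$; since $TA$ is formal (Proposition \ref{formality of TA}), Lemma \ref{lm:formal} expresses this class as a sum of decomposable elements of $H^\bullet(TA)$, and the positive cohomology of $TA$ being concentrated in odd degrees forces every such decomposable to vanish (its degree would be positive and even). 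Hence $\tau_*(m(\alpha)) = 0$ and $m(\alpha) \in \ann(A)$.

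For $\subseteq$, decompose $a \in \ann(A)$ as $a = a_{\mathrm{even}} + a_{\mathrm{odd}}$ by parity of degree. The even part lies in $\bigoplus_{k>0} H^{2k}(A) \subseteq \ann(A)$ (by the previous direction), so $a_{\mathrm{odd}} = a - a_{\mathrm{even}}$ remains in $\ann(A)$; it suffices to show that every class in $\ann(A)$ of odd positive degree is an MC higher product. By Proposition \ref{characterisations}, pick a finitely iterated odd algebraic spherical fibration $\iota \colon A \hookrightarrow A \otimes \Lambda V$ with $V$ finite-dimensional and $\iota_*(a) = 0$, together with a primitive $y \in A \otimes \Lambda V$ satisfying $D y = \iota(a)$. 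Note that the degree of $a$ being odd and at least $3$ (connectedness rules out $|a|=1$) makes the shift $2 - |a|$ nonpositive, compatibly with Definition \ref{def:ce}.

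The MC-product data $\lambda$ and a defining system whose MC higher product equals $a$ will be constructed from the triple $(V, D|_V, y)$, in the spirit of the example at the end of Remark \ref{GF}. The fibration structure gives an MC element $\mu \in A^+ \otimes L$, where $L = \mathrm{Der}^{\leq 0}(\Lambda V)$ with differential $d_L = [d_V, \cdot]$. The element $y$ will be used to build a central extension $\widetilde L$ of $L$ by $Z \cong \mathbb Q[2 - |a|]$: dualizing the coefficients of $y$ in a basis of $\Lambda V$ produces a Chevalley--Eilenberg $2$-cocycle $c$ with values in $Z$, and the equation $Dy = \iota(a)$ combined with $F(\mu) = 0$ should imply that the appropriate lift $\tilde\mu$ of $\mu$ to $A^+ \otimes \widetilde L$ satisfies $F(\tilde\mu) \in A^+ \otimes Z$ with cohomology class $a$, yielding $m(\mu) = a$ and hence $a \in MP_\lambda(A)$.

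The main obstacle is the explicit extraction of the central extension from $y$ and the verification that $F(\tilde\mu)$ has cohomology class exactly $a$. A cleaner route may be to replace $\mathrm{Der}^{\leq 0}(\Lambda V)$ by the free nilpotent DGLA on shifted duals of a basis of $V$, directly generalizing Example \ref{ex:Massey}: there the central generator $\eta$ encodes precisely the ``missing bracket'' obstructing the triviality of an iterated Massey product, and in the general case one appends an analogous central generator dictated by the monomial expansion of $y$. Either way the crux is a careful bookkeeping of coefficients translating the fibration-theoretic trivialization $Dy = \iota(a)$ into the Lie-theoretic cocycle data, so that the formula $F(\tilde\alpha) = d\tilde\alpha + \tfrac12[\tilde\alpha,\tilde\alpha]$ reproduces $a$ on the nose in $A^+ \otimes Z$.
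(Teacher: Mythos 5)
Your forward inclusion ($\bigoplus_{k>0}H^{2k}(A) + \bigcup_\lambda MP_\lambda(A) \subseteq \ann(A)$) is correct and is essentially the paper's argument: reduce to $\Ker\tau_*$ via Proposition \ref{characterisations}, kill the even part using $H^{2k}(TA)=0$, and kill the MC products by naturality, formality of $TA$ (Proposition \ref{formality of TA}), Lemma \ref{lm:formal}, and the vanishing of products of positive-degree classes in $H^\bullet(TA)$. The reduction of the reverse inclusion to odd-degree classes is also fine, although your parenthetical ``connectedness rules out $|a|=1$'' is not right as stated --- connectedness does not force $H^1(A)=0$; what is true is that each odd spherical fibration has Euler class of degree $\ge 2$, so the Gysin sequence shows $\ann(A)\cap H^1(A)=0$.

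The genuine gap is in the reverse inclusion for odd classes, which is the substance of the theorem. You correctly reduce to a finitely iterated odd spherical fibration with a primitive $y$, $Dy=\iota(a)$, but the construction of the MC-product data and of a defining system whose MC higher product equals $a$ is exactly the step you leave open (``the main obstacle is the explicit extraction of the central extension from $y$\dots''), and the route you sketch is doubtful: a central extension of $L=\mathrm{Der}^{\le 0}(\Lambda V)$ by $Z$ amounts to a Lie $2$-cocycle on $L$, and the characteristic map of Remark \ref{GF} evaluates such a cocycle on the MC element $\mu$ alone; the primitive $y$ lives in $A\otimes\Lambda V$, not in anything dual to $\Lambda^2L$, so it does not directly furnish such a cocycle, and it is unclear why the resulting class would be $a$ rather than some other invariant of the fibration. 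The paper instead argues by induction on the number of odd generators: the base case is the Gysin sequence ($\Ker\iota_*=e\cdot H^\bullet(A)$, and a cup product is an MC higher product by Example \ref{ex:Massey}), and the inductive step is Proposition \ref{prop:ind}, which takes a defining system $x\omega+\theta$ for $i_*(c)$ over $A[x]$, forms the auxiliary DGLA $\widetilde N=\mathbb{Q}\eta\ltimes\widetilde L[\varepsilon]$ with $\varepsilon$ a square-zero variable of degree $|x|$ and $\eta=\partial/\partial\varepsilon$, twists the differential by $[\ell_0\varepsilon,\cdot]$, and truncates at degree $0$ to produce the new MC-product data $\zeta$ together with the defining system $\bar\alpha\varepsilon+\beta+e\eta$ over $A$ computing $c$. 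None of this machinery (or a working substitute for it) appears in your proposal, so as written the inclusion $\ann(A)\subseteq\bigoplus_{k>0}H^{2k}(A)+\bigcup_\lambda MP_\lambda(A)$ is a plan rather than a proof.
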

\begin{proof}
We first prove the inclusion $\bigoplus_{k>0}H^{2k}(A) + \bigcup_{\lambda} MP_\lambda(A) \subseteq \ann (A)$. By Proposition \ref{characterisations} it suffices to show that $\tau_* \left( \bigoplus_{k>0}H^{2k}(A) + \bigcup_{\lambda} MP_\lambda(A)  \right)=0$. Since  $H^{2k}(TA) = 0$ for all $k>0$, $\tau_* \left( \bigoplus_{k>0}H^{2k}(A)\right)=0$. Note also that the product in $H^\bullet(TA)$ is zero.  Since $TA$ is formal (Proposition \ref{formality of TA}),  Lemma \ref{lm:formal} implies that
$MP_\lambda(TA)=0$ for any MC product data $\lambda$. But $\tau_\ast MP_\lambda (A) \subset MP_\lambda (TA)$.  So the statement is proved.

  Now we prove the inclusion
\begin{equation*} \ann(A) \subseteq \bigoplus_{k>0}H^{2k}(A) + \bigcup_{\lambda} MP_\lambda(A).  \end{equation*}
Let $a$ be an odd degree element in $\ann(A)$. By  Proposition \ref{characterisations} there exists a finitely iterated odd algebraic spherical fibration   $\iota\colon A \hookrightarrow A\otimes \Lambda V$ such that $\iota_*(a)=0$. We can write $A\otimes \Lambda V$ as $A[x_1, \cdots, x_n]$,  where $x_i$ are variables of odd degree such that $
 \diff x_1 \in A$ and $\diff x_i\in A[x_1, \cdots, x_{i-1}].$

 We shall prove that $a\in   \bigcup_{\lambda} MP_\lambda(A)$ by induction on the number of variables. When $n=1$, by Gysin sequence the kernel of $\iota_\ast\colon H^{\bullet}(A) \to H^{\bullet}(A[x_1])$ is precisely $e \cdot H^\bullet(A)$, where  $e = \diff x_1$ is the Euler class of the algebraic spherical fibration $A\hookrightarrow A[x_1]$. It follows that   $a = b \cdot e$ with $b\in H^\bullet(A)$. The ordinary cup product is   a Maurer-Cartan higher product (see   Example \ref{ex:Massey}), so the case where $n=1$ is proved. The proof of the induction step is contained in Proposition $\ref{prop:ind}$ below. This finishes the proof.

\end{proof}
%
%
%
%
%
%
%
%
%
%
%
%
%

The rest of this section  is devoted to completing the proof of Theorem $\ref{thm:main}$. We will prove the following key proposition.

\begin{proposition}\label{prop:ind}
 Suppose $i \colon (A, \diff)\to (A[x], \diff)$ is an odd algebraic spherical fibration with $\diff x= e \in A$.  Given an odd degree element $c \in H^\bullet(A)$, if  $i_*(c) \in MP_\lambda(A[x])$ for some MC product data $\lambda$, then there exists an MC product data $\zeta =\zeta(\lambda, c)$   such that
\begin{equation*}  c \in MP_\zeta(A)
.   \end{equation*}
\end{proposition}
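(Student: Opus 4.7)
The plan is to construct $\zeta=\zeta(\lambda,c)$ by enlarging the DGLAs $L$ and $\widetilde L$ of $\lambda$ with finite-dimensional sets of new generators tailored to the fiber coordinate $x$, the Euler class $e=\diff x$, and the given defining system. First I would fix a defining system $\alpha\in A[x]^+\otimes L$ and a lift $\tilde\alpha\in A[x]^+\otimes \widetilde L$ with $m(\alpha)=i_*(c)$, and use the $A$-module decomposition $A[x]=A\oplus xA$ (valid since $x$ is odd) to write
\[
\alpha=\alpha_0+x\alpha_1, \qquad \tilde\alpha=\tilde\alpha_0+x\tilde\alpha_1,
\]
with $\alpha_j\in A\otimes L$ and $\tilde\alpha_j\in A\otimes\widetilde L$. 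Separating the $1$- and $x$-components of the Maurer-Cartan equation $F(\alpha)=0$ (using $\diff x=e$ and $x^2=0$) yields, up to Koszul signs,
\[
F(\alpha_0)=-e\cdot\alpha_1, \qquad \diff\alpha_1+[\alpha_0,\alpha_1]=0,
\]
and decomposing $F(\tilde\alpha)=(u+xv)\otimes z$ in $A[x]^+\otimes Z$ gives $[u+xv]=i_*(c)$. The Gysin exact sequence for the spherical fibration $A\hookrightarrow A[x]$ then supplies $q\in A$ with $v=\diff q$ and a closed element of $A$ of the form $u-(-1)^\bullet qe$ representing the class $c$.

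Next I would construct $L^\zeta$ as follows. Let $W\subset L$ be the smallest $\diff_L$- and $\ad(L)$-stable graded subspace containing the $L$-coefficients of $\alpha_1$; by nilpotency of $L$ together with Remark \ref{finite}, $W$ is finite-dimensional. Let $\eta(W)$ be a shifted copy of $W$ with generators $\eta(w)$ in degree $|w|-1$ (strictly negative, as $|w|\le 0$), and define $L^\zeta:=L\oplus\eta(W)$ as a semidirect product with $\eta(W)$ an abelian ideal, equipped with (the appropriately signed versions of)
\[
\diff\eta(w):=w-\eta(\diff_L w),\qquad [l,\eta(w)]:=\eta([l,w]).
\]
The first rule is the mapping-cone differential ensuring $\diff^2=0$ on $\eta(W)$; the second rule is the $L$-module structure forced by the graded Leibniz rule. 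With these choices $L^\zeta$ is nilpotent and concentrated in nonpositive degrees. The central extension $\widetilde L^\zeta$ is the parallel construction on $\widetilde L$, augmented by one further central generator in degree $2-|c|$ spanning the new center $Z^\zeta\cong\mathbb Q[2-|c|]$.

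I take as candidate defining system
\[
\gamma:=\alpha_0+e\cdot\eta(\alpha_1)\in A^+\otimes L^\zeta,
\]
where $\eta(\alpha_1):=\sum_j b_j\otimes\eta(l_j)$ for $\alpha_1=\sum_j b_j\otimes l_j$. Computing $F(\gamma)$, the term $F(\alpha_0)$ contributes $-e\alpha_1$, the differential $\diff(e\cdot\eta(\alpha_1))$ contributes $+e\alpha_1$ (from the $\eta(w)\mapsto w$ part of $\diff\eta$) together with $-e\cdot\eta(\diff\alpha_1)$, and the bracket $[\alpha_0,e\cdot\eta(\alpha_1)]$ contributes $e\cdot\eta([\alpha_0,\alpha_1])$. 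The leading $\pm e\alpha_1$ terms cancel and the remaining terms combine, via $\diff\alpha_1+[\alpha_0,\alpha_1]=0$, into $-e\cdot\eta\bigl(\diff\alpha_1+[\alpha_0,\alpha_1]\bigr)=0$, so $\gamma$ is a Maurer-Cartan element. Finally, lifting $\gamma$ to $\tilde\gamma\in A^+\otimes\widetilde L^\zeta$ and computing $F(\tilde\gamma)$ modulo the new central generator, the identity $F(\tilde\alpha_0)+e\tilde\alpha_1=u\otimes z$ together with the Gysin identity from the first paragraph identifies $[F(\tilde\gamma)]$ with $c$ in $H^{|c|}(A)$, giving $c\in MP_\zeta(A)$.

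The main obstacle is the clash between the positive odd degree of $x$ and the requirement that $L$ be concentrated in nonpositive degrees. This forces the enlargement $\eta(W)$ to live in strictly negative degrees and forces the Euler class $e$ to appear as a degree-raising multiplier in the defining system $\gamma$, so that every term in $\gamma$ sits in $(A\otimes L^\zeta)^1$. If the coefficients $b_j$ of $\alpha_1$ fail to be $\diff_A$-closed, a naive central construction of $\eta(W)$ produces a nonzero error in $F(\gamma)$; it is precisely the nontrivial $L$-module structure on $\eta(W)$ that reduces this error to $e\cdot\eta(\diff\alpha_1+[\alpha_0,\alpha_1])$, which vanishes by the $x$-component of the original MC equation. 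The nilpotent finite-length filtration required of $\zeta$ follows from the finite-dimensionality of $W$ together with Remark \ref{finite}.
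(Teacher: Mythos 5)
Your opening moves match the paper's: the decomposition $\alpha=\alpha_0+x\alpha_1$ and the two component equations $F(\alpha_0)=-e\alpha_1$, $\diff\alpha_1+[\alpha_0,\alpha_1]=0$ are exactly the paper's equations \eqref{MC1eq}--\eqref{MC2eq} in different notation, and your cancellation of the $\pm e\alpha_1$ terms is the same mechanism that makes the paper's element $\mu=\alpha\varepsilon+\beta+e\eta$ a Maurer--Cartan element. Your replacement of the paper's semidirect product $\mathbb{Q}\eta\ltimes\widetilde L[\varepsilon]$ (which requires a truncation at degree $0$ and a twist of the differential by $[\ell_0\varepsilon,\cdot]$ to deal with the constant term of $\alpha_1$) by a mapping-cone extension $L\oplus\eta(W)$ sitting entirely in nonpositive degrees is a genuinely different and in principle attractive idea. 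However, the endgame has a real gap.

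The problem is that your $\zeta$ is not a valid MC-product data in the sense of Definition \ref{def:ce}, and the class you read off is not visibly $c$. Since $\widetilde W$ must be closed under $\tilde\diff_L$ and $\ad(\widetilde L)$ for your formulas $\diff\eta(w)=w-\eta(\tilde\diff_L w)$ and $[l,\eta(w)]=\eta([l,w])$ to make sense, the relation $\tilde\diff\tilde\alpha_1+[\tilde\alpha_0,\tilde\alpha_1]=\pm v\otimes z$ forces $z\in\widetilde W$ whenever $v\neq 0$ (and brackets $[\tilde\ell,\tilde w]$ landing in $Z$ typically force it even when $v=0$). Then $\diff\eta(z)=z$, so either the kernel of $\widetilde L^\zeta\to L^\zeta$ contains both $z$ and $\eta(z)$ and is not isomorphic to a single $\mathbb{Q}[q]$ (violating condition $(b)$), or, if you declare $Z^\zeta=\mathbb{Q}z$ only, the condition $\Ran(\tilde\diff)\cap Z^\zeta=0$ of condition $(a)$ fails because $z=\diff\eta(z)$ is exact. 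Correspondingly, the curvature of your lift computes to $F(\tilde\gamma)=u\otimes z\pm e\,\diff q\otimes\eta(z)$, which has components along two central directions of different degrees; your appeal to the Gysin sequence produces $v=\diff q$ but you never use it to modify $\tilde\alpha_1$ or $\tilde\gamma$, so the unwanted $\eta(z)$-component never disappears. The paper avoids both issues by first renormalizing the lift so that $F(x\alpha+\beta)=cz$ exactly (absorbing $\diff(xu+v)$ into the lift and the representative of $c$), which kills the $x$-component of the curvature identically, and by keeping the center of the new extension equal to the original $Z\subset\widetilde L\subset\widetilde N$, whose transversality to $\Ran(\tilde\diff_M)$ is then checked by a parity argument. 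Your argument would need the analogous normalization up front, plus a construction of $\eta(\cdot)$ that avoids ever applying it to $z$, before the final identification $[F(\tilde\gamma)]=c$ can go through.
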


\begin{proof}
Denote the MC-product data $ \lambda $ by $\lambda =  (Z\rightarrowtail \widetilde L \xrightarrowdbl{\pi} L, \diff_L, \tilde \diff_L)$. Let $x\omega +\theta \in (A[x])^+ \otimes L$ be the defining system that produces $i_*(c)$, where $\omega \in A\otimes L$,  $\theta \in A^+\otimes L$. Suppose
$\alpha \in A\otimes \widetilde L$ and $\beta \in A^+ \otimes \widetilde{L}$ are lifts of $\omega$ and $\theta$ respectively. Suppose the degree of $x$ is $n$. We have $|\alpha| = 1-n$ and $|\beta| = 1$.  It follows that
\begin{align*}
F(x\alpha +\beta) &= \diff(x\alpha+ \beta) + \frac{1}{2}[x\alpha +\beta, x\alpha +\beta] \\
& = e\alpha - x (\diff \alpha) + \diff \beta + \frac{1}{2}\big(x[\alpha, \beta] + (-1)^{|\beta||x|} x [\beta, \alpha] + [\beta, \beta]\big) \\
&  = e\alpha +\diff \beta + \frac{1}{2}[\beta, \beta] + x ([\alpha, \beta] - \diff \alpha).
\end{align*}
By assumption, $F(x\alpha + \beta)$ is cohomologous to $i_\ast (c)$, thus
\begin{equation*}    F(x\alpha + \beta) = cz + \diff(xu + v) \end{equation*}
for $u, v\in  A\otimes Z \subset A\otimes \widetilde L$, and $z \in Z$.
Replacing $\alpha$ by $\alpha-u$ -- another lifting of $\alpha$, and $c$ by $c+dv$ (where we identify $A\otimes Z$ with $A$ shifted by $q$) -- another representative of the same cohomology class,
we can assume that
 \begin{equation*}    F(x\alpha+ \beta) = cz, \end{equation*}
which can be rewritten as
\begin{equation}\label{MC1eq}   -\diff \alpha +[\alpha,\beta] = 0, \end{equation}
\begin{equation}\label{MC2eq}   e\alpha +\diff \beta + \frac{1}{2}[\beta, \beta] = cz.  \end{equation}

We now construct an auxiliary DGLA $N$ and a central extension of DGLAs
$Z \rightarrowtail \widetilde{N} \twoheadrightarrow N$.

Let us first fix some notation. Let $\varepsilon$ be a variable of degree $n$ such that $\varepsilon^2=0$.
We define
\begin{equation*} \widetilde N = \mathbb{Q} \eta \ltimes (\widetilde{L}[\varepsilon]),\end{equation*} where $\mathbb{Q} \eta$ is the one-dimensional Lie algebra generated by $\eta$ of degree $(-n)$ and  acts on $\widetilde{L}[\varepsilon]$ by $\frac{\diff}{\diff \varepsilon}$. More explicitly,
\begin{equation*} \widetilde N=
\widetilde{L}\oplus \widetilde{L} \varepsilon \oplus \mathbb{Q}\eta, \end{equation*} and the following relations hold:
\begin{equation*} [s\varepsilon, t\varepsilon]=0, [\eta, \eta]=0, [\eta, t] = 0,  \end{equation*}
\begin{equation*}  [\eta, s\varepsilon] =   (-1)^{|s|}s, \end{equation*}
 \begin{equation*} [s, t\varepsilon]= [s, t]\varepsilon, \end{equation*}
  for all $s, t\in \widetilde{L}$. The differential $\tilde \diff_N$ on $\widetilde N$ is defined by
  \begin{equation*}
  \tilde{\diff}_N (\ell) =\tilde{\diff}_L (\ell), \ \tilde{\diff}_N (\eta) =0, \ \tilde{\diff}_N (\ell\varepsilon) =\tilde{\diff}_L (\ell) \varepsilon,
  \end{equation*}
  where $\tilde\diff_L$ is the differential on $\widetilde L$. Consider the subDGLA
$Z \subset \widetilde L \subset \widetilde N$. It is clear that $Z$  is central  in $\widetilde N$, and $Z\subset \Ker(\tilde{\diff}_N)$. We define $N: = \widetilde{N}/Z$. Moreover, since $\widetilde L$ is nilpotent, both $\widetilde N$ and $N$ are nilpotent. However, in general, both $\widetilde N$ and $N$  may not be  concentrated in nonpositive degrees.

\begin{lemma}\label{curvg} Set $\mu = \alpha \varepsilon+ \beta+ e\eta \in (A \otimes \widetilde{N})^1 $. Then $F(\mu) = cz$.
\end{lemma}
There exist   $\ell_0 \in \widetilde{L}^{1-n}$ and $\bar{a} \in A^+ \otimes \widetilde{L}^{\le -n}$ such that $\alpha = 1\otimes \ell_0 + \bar{\alpha}$.
\begin{lemma} \label{dl} $\tilde{\diff}_L(\ell_0)=0.$
\end{lemma}
\begin{proof}
From \eqref{MC1eq}, we have
\[
1\otimes \tilde{\diff}_L(\ell_0) =-\diff \bar{\alpha}+[\alpha, \beta].
\]
But the right hand side is in $A^+ \otimes \widetilde{L}$ and the statement follows.
\end{proof}

We note that the degree of $\ell_0$ is even. At the same time the degree of the MC higher product $c$ is $2-q$, where   $q$ is the degree of the  elements of $Z$. Since by our assumption $|c|$ is odd, $q$ is odd as well. As a consequence, $\ell_0 \notin Z$.

Consider now a DGLA $\widetilde N'$ which coincides with $\widetilde N$ as a graded Lie algebra; the differential $\tilde{\diff}_{N'}$ is given by
\[
\tilde{\diff}_{N'} = \tilde{\diff}_{N} +[\ell_0\varepsilon, \cdot].
\]
Lemma \ref{dl} together with the obvious identity $[\ell_0\varepsilon, \ell_0\varepsilon]=0$ imply that  $\widetilde N'$ is a DGLA.

In these terms,  Lemma \ref{curvg} can be rewritten as
\begin{lemma}Set $\bar{\mu}= \bar{\alpha} \varepsilon+\beta+ e\eta \in (A^+ \otimes \widetilde N')^1 $. Then $ F(\bar{\mu}) = cz.$

\end{lemma}

We now define a subDGLA $\widetilde M\subset \widetilde N' $ as a truncation of $\widetilde N'$ at degree $0$, i.e.
\[
\widetilde M^i=
\begin{cases}
0 &\text{ if } i>0,\\
\Ker (\tilde{\diff}_{N'}) \cap  (\widetilde N')^0 &\text{ if } i=0,\\
 (\widetilde N')^i &\text{ if } i<0.
\end{cases}
\]
We denote the restriction of $\tilde \diff_{N'}$ on $\widetilde M$ by  $\tilde \diff_M$ .
\begin{lemma} $\bar{\mu} \in (A^+ \otimes \widetilde M)^1$.
\end{lemma}
\begin{proof}
It is clear from the construction that $\bar{\mu} \in A^+ \otimes (\widetilde N')^{\le 0}$.
Write $\bar \mu=\bar \mu_0+\bar \mu_-$, where  $\bar \mu_0 \in A^+ \otimes (\widetilde N')^{0}$ and $\bar \mu_-\in A^+ \otimes (\widetilde N')^{<0}$. We need to verify that
$\bar \mu_0 \in A^+\otimes (\widetilde M)^0$. We have
$F(\bar \mu) = (1\otimes \tilde{\diff}_{N'}) \bar \mu_0 +R$, with $(1\otimes \tilde{\diff}_{N'}) \bar \mu_0 \in A^+ \otimes (\widetilde N')^{1}$ and  $R \in  A^+ \otimes (\widetilde N')^{0}$. It follows that $(1\otimes \tilde{\diff}_{N'}) \bar \mu_0 =0$, hence $  \bar \mu_0 \in A^+ \otimes \Ker \tilde{\diff}_{N'}$.
\end{proof}

Note that $Z$ is also a central subDGLA of $\widetilde M$.
\begin{lemma} $Z\subset \Ker(\tilde \diff_M)$ and $\Ran(\tilde \diff_M) \cap Z = 0$.
\end{lemma}
\begin{proof}
The first statement is clear. For the second statement assume that, to the contrary, $ \tilde{\diff}_M(w) \in Z$ is nonzero. Observe first that  elements in $Z$ have odd degrees, so if $ \tilde{\diff}_M(w) \in Z$, then $|w|$ is even.
It follows that $w= s+ t\varepsilon$, where $s$, $t \in \widetilde L$.  Then
$  \tilde{\diff}_{M} (s + t\varepsilon) =\tilde{\diff}_L(s) + r \varepsilon$ for some $r \in \widetilde L$.  It follows that
$\tilde{\diff}_L(s) \in Z$ and $r=0$. This is possible only if $\tilde{\diff}_L(s)=0$. Therefore,  $\tilde{\diff}_M(w)=0$. This finishes the proof.
\end{proof}

Now consider the central extension of nilpotent DGLAs $Z \rightarrowtail \widetilde M \twoheadrightarrow M$, where $M=\widetilde M/Z$ with  the differential $\diff_M$ induced by $\tilde \diff_M$. We define $\zeta$ to be the MC product data $( Z \rightarrowtail \widetilde M \twoheadrightarrow M, \tilde \diff_{M}, \diff_M)$. The lemmas above showed that $\bar \mu$ is a defining system with respect to $\zeta$, and its associated MC higher product is $c$. This finishes the proof.

\end{proof}


\begin{thebibliography}{1}
	
	\bibitem{MR972343}
	William~M. Goldman and John~J. Millson.
	\newblock The deformation theory of representations of fundamental groups of
	compact {K}{\"a}hler manifolds.
	\newblock {\em Inst. Hautes {\'E}tudes Sci. Publ. Math.}, (67):43--96, 1988.
	
	\bibitem{Gorokhovsky:2017aa}
	Alexander Gorokhovsky, Dennis Sullivan, and Zhizhang Xie.
	\newblock Generalized {E}uler classes, differential forms and commutative
	{DGA}s.
	\newblock {\em to appear in Journal of Topology and Analysis}, 2017.
	
	\bibitem{MR0394720}
	V.~K. A.~M. Gugenheim and J.~Peter May.
	\newblock {\em On the theory and applications of differential torsion
		products}.
	\newblock American Mathematical Society, Providence, R.I., 1974.
	\newblock Memoirs of the American Mathematical Society, No. 142.
	
	\bibitem{MR0098366}
	W.~S. Massey.
	\newblock Some higher order cohomology operations.
	\newblock In {\em Symposium internacional de topolog\'\i a algebraica
		{I}nternational symposium on algebraic topology}, pages 145--154. Universidad
	Nacional Aut{\'o}noma de M{\'e}xico and UNESCO, Mexico City, 1958.
	
	\bibitem{Schlessinger:2012kq}
	Mike Schlessinger and Jim Stasheff.
	\newblock Deformation theory and rational homotopy type.
	\newblock arXiv:1211.1647, 2012.
	
\end{thebibliography}

\end{document}